\numberwithin{equation}{section}
\newtheorem{theorem}{Theorem}[section]
\newtheorem{proposition}[theorem]{Proposition}
\newtheorem{lemma}[theorem]{Lemma}
\newtheorem{definition}[theorem]{Definition}
\theoremstyle{definition}
\newtheorem{remark}[theorem]{Remark}
\newcommand{\norm}[1]{\left\|#1\right\|}
\newcommand{\abs}[1]{\left|#1\right|}
\newcommand*{\supp}{\ensuremath{\mathrm{supp\,}}}
\newcommand*{\Id}{\ensuremath{\mathrm{Id}}}
\renewcommand*{\div}{\ensuremath{\mathrm{div\,}}}
\newcommand*{\N}{\ensuremath{\mathbb{N}}}
\newcommand*{\T}{\ensuremath{\mathbb{T}}}
\newcommand*{\R}{\ensuremath{\mathbb{R}}}
\newcommand{\eps}{\varepsilon}
\newcommand{\OO}{\mathcal O}
\newcommand{\EE}{\mathcal E}
\newcommand{\HH}{\mathcal H}
\newcommand{\RR}{\mathring R}
\newcommand{\RSZ}{\mathcal R}
\newcommand*{\curl}{\ensuremath{\mathrm{curl\,}}}
\newcommand{\les}{\lesssim}
\title{\bf Weak solutions of ideal MHD which\\  do not conserve magnetic helicity}
\author{Rajendra Beekie
\thanks{Courant Institute of Mathematical Sciences,  New York University.
{\footnotesize \href{mailto:beekie@cims.nyu.edu}{beekie@cims.nyu.edu}.}
}
\and
Tristan Buckmaster
\thanks{Department of Mathematics, Princeton University.
{\footnotesize \href{mailto:buckmaster@math.princeton.edu}{buckmaster@math.princeton.edu}.}
}
\and 
Vlad Vicol
\thanks{Courant Institute of Mathematical Sciences,  New York University.
{\footnotesize \href{mailto:vicol@cims.nyu.edu}{vicol@cims.nyu.edu}.}
}
}
\date{ }
\begin{document}

\maketitle

\begin{abstract}
We construct weak solutions to the ideal magneto-hydrodynamic (MHD) equations which have finite total energy, and whose magnetic helicity is not a constant function of time. In view of Taylor's conjecture, this proves that there exist finite energy weak solutions to ideal MHD which cannot be attained in the infinite conductivity and zero viscosity limit. Our proof is based on a Nash-type convex integration scheme with intermittent building blocks adapted to the geometry of the MHD system.
\end{abstract}

\section{Introduction}
We consider the three-dimensional incompressible {\em ideal} magneto-hydrodynamic (MHD) equations
\begin{subequations}
\label{eq:ideal:mhd}
\begin{align}
&\partial_t u + (u\cdot \nabla) u - (B \cdot \nabla) B + \nabla p  = 0 
\\
&\partial_t B + (u\cdot \nabla) B - (B\cdot \nabla) u  = 0
\\
& \div u  = \div B =  0.
\end{align}
\end{subequations}
posed on the periodic box $\T^3 = [-\pi,\pi]^3$, for the velocity field  $u : \mathbb{T}^3 \times [0,T] \to \mathbb{R}^3 $, the magnetic field $ B: \mathbb{T}^3 \times [0,T]  \to \mathbb{R}^3$, and the scalar pressure $p:\mathbb{T}^3 \times [0,T] \to \mathbb{R}$. This is the classical macroscopic model coupling Maxwell's equations to the evolution of an electrically conducting fluid/plasma~\cite{Biskamp97,Davidson01,SermangeTemam83}.

\subsection{MHD conservation laws}
The ideal MHD equations~\eqref{eq:ideal:mhd} posses a number of conservation laws, which inform the class of solutions we work with. The {\em mean} of $u$ and $B$ over $\T^3$ are conserved in time (even for weak solutions)  and thus we consider solutions of~\eqref{eq:ideal:mhd} such that $\int_{\T^3} u(x,t) dx = \int_{\T^3} B(x,t) dx = 0$. For smooth solutions of \eqref{eq:ideal:mhd} the coercive conservation law, and in fact Hamiltonian~\cite{ViskikDolzhanskii78,KhesinEtAl19}  of the system, is given by the {\em total energy} 
\[
\EE(t) = \frac 12 \int_{\T^3} |u(x,t)|^2 + |B(x,t)|^2 \, dx \, .
\] 
This motivates us to work with solutions to \eqref{eq:ideal:mhd} such that $u(\cdot,t), B(\cdot, t) \in L^2(\T^3)$ for all times $t$. At this $L^\infty_t L^2_x$ regularity level the {\em cross helicity} 
\[ 
\HH_{\omega,B} = \int_{\T^3} u(x,t) \cdot B(x,t) dx
\] 
is well-defined, and    \eqref{eq:ideal:mhd} formally  conserves the cross helicity. Lastly, we mention the conservation of the {\em magnetic helicity}~\cite{Woltjer58a,Woltjer58b,Moffatt69}, defined as 
\[
\HH_{B,B}(t) = \int_{\T^3} A(x,t) \cdot B(x,t) \, dx \, ,
\]  
where $A$ is a vector potential for $B$, i.e. $\curl A = B$. As we work on the simply connected domain $\T^3$, the value of $\HH_{B,B}(t)$ is independent of the choice of $A$. Indeed, keeping in mind the Helmholtz decomposition we note that the gradient part of $A$ is orthogonal to $B$, and thus $A$ may be chosen without loss of generality such that $\div A = 0$ and $ \int_{\mathbb{T}^3} A(x,t) dx  = 0$.
Throughout the paper we work with this representative vector potential  given by the Biot-Savart law: $A = \curl (-\Delta)^{-1} B$. This also justifies our generalized helicity notation used above: $\HH_{f,g} = \int_{\T^3} \curl (-\Delta)^{-1} f \cdot g \, dx$ (see also~\cite{KhesinEtAl19}).

We emphasize that as opposed to the total energy and cross helicity (the so-called Els\"asser energies~\cite{AluieEyink10}), the magnetic helicity lies at a negative regularity level, namely $L^\infty_t \dot{H}^{-1/2}_x$. This subtle difference points to the fact that magnetic helicity plays a special role among the conserved quantities of \eqref{eq:ideal:mhd}, a fact which is famously manifested in the context of reconnection events in magneto-hydrodynamic turbulence. While turbulent low-density plasma configurations are observed to dissipate   energy~\cite{MininniPouquet09,DallasAlexakis14}, it is commonly accepted knowledge in the plasma physics literature that the magnetic helicity is  conserved in the infinite conductivity limit. This striking phenomenon is known as {\em Taylor's conjecture}~\cite{Taylor74,Taylor86,Berger99,Escande15,Moffatt15}, and we recall in Section~\ref{sec:Taylor} its mathematical foundations~\cite{Caflisch1997,FaracoLindberg18b}. In contrast, our main result (cf.~Theorem~\ref{thm:main}) shows that there exist weak solutions of the ideal MHD equations (cf.~Definition~\ref{def:weak:sol}) whose {\em magnetic helicity is not a constant function of time}. We thus prove that the ideal-MHD-version of Taylor's conjecture is false. 

\subsection{Weak solutions and Onsager exponents for MHD}
Before stating our result precisely, we recall a number of previous works on this subject. First, we introduce the notion of  {\em weak/distributional} solutions to \eqref{eq:ideal:mhd} that we consider in this paper. We work with solutions of regularity at the level of the strongest known coercive conservation law, i.e., they have finite energy.

\begin{definition}[\bf Weak solution]
\label{def:weak:sol}
We say $(u, B) \in C((-T,T); L^2(\mathbb{T}^3))$ is a weak solution of the ideal MHD system \eqref{eq:ideal:mhd} if for any $t \in (-T, T) $ the vector fields $(u( \cdot,t ), B(\cdot,t ))$ are divergence free in the sense of distributions, they have zero mean, and \eqref{eq:ideal:mhd} holds in the sense of distributions, i.e. 
\begin{align*}
\int_{-T}^T \int_{\mathbb{T}^3}  \partial_t \psi  \cdot u + \nabla \psi  :( u \otimes u - B \otimes B) dx dt &= 0  
  \\
\int_{-T}^T \int_{\mathbb{T}^3}  \partial_t \psi  \cdot B + \nabla \psi  :( u \otimes B - B \otimes u) dx dt &= 0  
\end{align*}
hold for all divergence free test functions $\psi  \in C_0^{\infty} ((-T, T) \times \mathbb{T}^3) $.
\end{definition}

In analogy with the famed Onsager conjecture for weak solutions of the 3D Euler equations~\cite{Onsager49}, it is natural to ask the question of the minimal regularity required by weak solutions of \eqref{eq:ideal:mhd} to respect the ideal MHD conservation laws: the energy $\EE$, the cross helicity $\HH_{\omega,B}$, and the magnetic helicity $\HH_{B,B}$. Once a suitable scale of Banach spaces is fixed to measure regularity, this putative minimal regularity exponent defines a {\em critical/threshold exponent} above which all weak solutions obey the given conservation law (the rigid side), while below this exponent there exist weak solutions which violate it (the flexible side). See~\cite{Klainerman17}, where this question is posed for general nonlinear, supercritical, Hamiltonian evolution equations (3D Euler and 3D MHD being examples of such systems), \cite[Remark 1.8]{BSV16}  in the context of the SQG system, and~\cite{IsettVicol15} for more general active scalar equations.

Concerning the conservation of the $L^2_x$ quantities $\EE$ and $\HH_{\omega,B}$, similar results have been established in parallel to the rigid side of the Onsager conjecture in 3D Euler~\cite{CET94,Eyink94,CCFS08}. To see this, recall that the Els\"asser variables $z^{\pm} = u \pm B$ are incompressible and obey $\partial_t z^{\pm} + z^{\mp} \cdot \nabla z^{\pm} = -\nabla \Pi$, where $\Pi = p + b^2/2$. Using the commutator estimates of~\cite{CET94}, Caflisch-Klapper-Steele~\cite{Caflisch1997} proved the conservation of energy and cross helicity for weak solutions $(u,B) \in B^{\alpha}_{3,\infty}$ with $\alpha>1/3$. See also~\cite{KangLee07} who use the methods of~\cite{CCFS08} to reach the endpoint case $B^{1/3}_{3,c(\N)}$. 

The {\em analogy with 3D Euler spectacularly fails} when we consider the {\em flexible part of the Onsager question}, namely to construct weak solutions to \eqref{eq:ideal:mhd}, in the sense of Definition~\ref{def:weak:sol}, with  regularity below $1/3$ when measured in $L^3$, that do not conserve energy, or cross helicity. For 3D Euler the Onsager conjecture is now solved, cf.~Isett~\cite{Isett18}, and B.-De Lellis-Sz\'ekelyhidi-V.~\cite{BDLSV19} for dissipative solutions.  In contrast, for 3D MHD the only non-trivial (i.e. $B\not\equiv 0$) non-conservative example arises when one imposes a symmetry assumption which embeds the system into a $2 \frac 12$D Euler flow:  if $v = (v_1,v_2,v_3)(x_1,x_2)$ solves 3D Euler, then setting $u = (v_1,v_2,0)$ and $B= (0,0,v_3)$, the resulting $x_3$ independent functions solve the ideal MHD system. This symmetry reduced system is used by Bronzi-Lopes Filho-Nussenzveig Lopes in~\cite{BronziEtAl15} to construct an example with $\EE$ not constant. Note however that in this case both the cross helicity and the magnetic helicity vanish identically, so that $\HH_{\omega,B} = \HH_{B,B} = 0$ are conserved. Thus, {\em to date there are no known examples of non-conservative, truly 3D, weak solutions to} \eqref{eq:ideal:mhd}. The only attempt at constructing wild solutions is the work of Faraco-Lindberg~\cite{FaracoLindberg18a}, who use the ideas of De Lellis-Sz\'ekelyhidi~\cite{DLSZ09} and the Tartar framework~\cite{Tartar83} to show that there do in fact exist non-vanishing smooth  {\em strict subsolutions} of 3D ideal MHD with compact support in space-time. However, the interior of the 3D $\Lambda$-convex hull is empty, and it is not known if a convex integration approach would succeed to construct an actual weak solution, starting with this subsolution. In fact, in this same paper~\cite{FaracoLindberg18a} it is shown that ideal 2D MHD does not have weak solutions (or even subsolutions) with compact support in time and with $B\not \equiv 0$.  The emptiness of the interior of the 3D $\Lambda$-convex hull for \eqref{eq:ideal:mhd} may seem like just a technical obstacle for the flexible part of the Onsager question. There is, however, a {\em fundamental physical reason} why the construction of $L^\infty_{x,t}$ weak solutions to \eqref{eq:ideal:mhd} fails. A convex integration scheme which would produce weak solutions $(u,B)\in L^\infty_{x,t}$ such that $\EE$ and $\HH_{\omega,B}$ are non-constant, would inadvertently also show that $\HH_{B,B}$ is non-constant. This is, however, impossible: the magnetic helicity is conserved by weak solutions  under much milder assumptions.  We note that a parallel obstruction for $L^\infty_{x,t}$ convex-integration constructions occurs in the setting of the SQG equation: the kinetic energy conservation requires that the potential vorticity has $1/3$ regularity, whereas the conservation of the Hamiltonian only requires $L^3_{t,x}$ integrability~\cite{IsettVicol15,BSV16}. 

Indeed,  Caflisch-Klapper-Steele prove in~\cite{Caflisch1997} that the magnetic helicity is conserved by weak solutions of \eqref{eq:ideal:mhd} as soon as $(u,B) \in B^{\alpha}_{3,\infty}$ with $\alpha>0$. Note the considerably weaker condition  $\alpha>0$ for $\HH_{B,B}$ conservation, as opposed to $\alpha>1/3$ for $\EE$. Kang-Lee~\cite{KangLee07} and subsequently Aluie~\cite{Aluie09} and Faraco-Lindberg~\cite{FaracoLindberg18a} were able to derive the endpoint case which states that magnetic helicity is conserved as soon as $(u,B) \in L^3_{x,t}$. This discrepancy between the requirements for energy and magnetic helicity conservation is the underlying physical difficulty to our construction, known in the plasma physics community as Taylor's conjecture (discussed in Section~\ref{sec:Taylor} below). 

Whether the $ L^3_{x,t}$ regularity threshold for the conservation of $\HH_{B,B}$ is sharp remains open. As mentioned before, we do not have  examples of non-conservative solutions to \eqref{eq:ideal:mhd}. This open problem is stated explicitly in~\cite{FaracoLindberg18b}: ``{\em It is still open whether magnetic helicity is conserved if $u$ and $B$ belong to the energy space $L^\infty(0, T ; L^2(\T^3, \R^3))$}''. In this paper we answer this question in the positive, see Theorem~\ref{thm:main}.

\subsection{Taylor's conjecture}
\label{sec:Taylor}

Before turning to our main result, we briefly discuss the mathematical aspects of Taylor's conjecture, which  has interesting consequences concerning the set of weak solutions to~\eqref{eq:ideal:mhd}.

The viscous ($\nu>0)$ and resistive ($\mu>0$) MHD equations are given by
\begin{subequations}
\label{eq:MHD}
\begin{align}
&\partial_t u + (u\cdot \nabla) u - (B \cdot \nabla) B + \nabla p  = \nu \, \Delta u \\
&\partial_t B + (u\cdot \nabla) B - (B\cdot \nabla) u  = \mu \, \Delta B \\
& \div u  = \div B =  0.
\end{align}
\end{subequations}
In analogy to the 3D Navier-Stokes equation, using the  energy inequality for \eqref{eq:MHD}
\begin{equation}
    \label{eq:energy:inequality}
        \EE(t) +   \nu \int_{t_0}^t \|\nabla u(\cdot,s) \|_{L^2}^2 ds  +  \mu \int_{t_0}^t \|\nabla B(\cdot,s) \|_{L^2}^2 ds  \leq  \EE(t_0) \, ,
\end{equation}
it is classical to build a theory of Leray-Hopf weak solutions for \eqref{eq:MHD}. These are solutions with $u,B \in C^0_{w,t} L^2_x \cap L^2_t \dot{H}^1_x$ which obey \eqref{eq:energy:inequality} for a.e. $t_0\geq 0$ and all $t>t_0$. Note that the only {\em uniform in $(\nu,\mu)$ bounds} for Leray-Hopf weak solutions to \eqref{eq:MHD} are at the $L^\infty_t L^2_x$ regularity level, as in Definition~\ref{def:weak:sol}. Following \cite[Definition 1.1]{FaracoLindberg18b} we recall the definition: 

\begin{definition}[{\bf Weak ideal limit}~\cite{FaracoLindberg18b}]
Let $(\nu_j, \mu_j) \to (0,0)$ be a sequence of vanishing viscosity and resistivity. Associated to a sequence of divergence free initial data converging weakly  $(u_{0,j}, B_{0,j}) \rightharpoonup (u_{0}, B_{0})$ in $L^{2}(\mathbb{T}^3)$, let $(u_j , B_j)$ be a sequence of Leray-Hopf weak solutions of \eqref{eq:MHD}.  Any pair of functions $(u,B)$ such that $(u_j,B_j) \stackrel{\ast}{\rightharpoonup} (u,B)$ in $L^{\infty}(0,T; L^2(\mathbb{T}^3))$, are called a {\em weak ideal limit} of the sequence $(u_j, B_j)$. 
\end{definition}

Note in particular that a weak ideal limit $(u,B)$ need not be a weak solution of the ideal MHD equations \eqref{eq:ideal:mhd}. Taylor's conjecture states that weak ideal limits of Leray-Hopf weak solutions to \eqref{eq:MHD} conserve the magnetic helicity. This was proven recently in~\cite{FaracoLindberg18b}:

\begin{theorem}[{\bf Proof of Taylor's conjecture}~\cite{FaracoLindberg18b}]
\label{thm:Taylor}
Suppose $(u,B) \in L^\infty_t L^2_x$ is a weak ideal limit of a sequence of Leray-Hopf weak solutions. Then $\HH_{B,B}$ is a constant function of time. In particular, finite energy weak solutions of the ideal MHD equations~\eqref{eq:ideal:mhd} which are weak ideal limits, conserve magnetic helicity.
\end{theorem}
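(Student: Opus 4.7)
The plan is to exploit that for the resistive system \eqref{eq:MHD}, magnetic helicity dissipates only at a rate controlled by $\mu$, and that $\HH_{B,B}$ is continuous under a mode of convergence stronger than weak-$\ast$ in $L^\infty_t L^2_x$ that we can actually extract. The argument naturally splits into (i) deriving a magnetic helicity balance identity for each Leray-Hopf solution, (ii) showing its resistive correction vanishes as $\mu_j \to 0$, and (iii) passing to the limit in the helicity itself.

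For step (i), I would rewrite the induction equation in the equivalent form $\partial_t B_j = \curl(u_j \times B_j) + \mu_j \Delta B_j$ (which follows from $\div u_j = \div B_j = 0$) and test against the Biot-Savart potential $A_j = \curl(-\Delta)^{-1} B_j$. The nonlinear contribution cancels because $\int \curl(u_j \times B_j) \cdot A_j\, dx = \int (u_j \times B_j)\cdot B_j\, dx = 0$, while the resistive term produces $\int A_j \cdot \Delta B_j\, dx = -\int B_j \cdot \curl B_j\, dx$. Since at Leray-Hopf regularity the induction equation places $\partial_t B_j$ into a negative Sobolev space and $A_j \in L^2_t H^1_x$, a standard mollification-in-time argument legitimizes this pairing and yields, for every $t\in[0,T]$,
\[
    \HH_{B_j,B_j}(t) = \HH_{B_j,B_j}(0) - 2\mu_j \int_0^t \int_{\T^3} B_j \cdot \curl B_j \, dx\, ds.
\]

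For step (ii), Cauchy-Schwarz together with \eqref{eq:energy:inequality} controls
\[
    \mu_j \int_0^T \|B_j\|_{L^2}\|\curl B_j\|_{L^2}\, ds \leq \sqrt{T\mu_j}\, \|B_j\|_{L^\infty_t L^2_x}\bigl(\mu_j \|\nabla B_j\|_{L^2_{t,x}}^2\bigr)^{1/2},
\]
which tends to zero uniformly in $t$ as $\mu_j\to 0$ using the uniform $L^2$ bound on the initial data. For step (iii), I would use the equation to bound $\partial_t B_j$ uniformly in $L^1_t H^{-N}_x$ for some $N$ (the nonlinear flux lies in $L^\infty_t L^1_x$ and the resistive term vanishes in $L^2_t H^{-1}_x$), and combine this with the uniform bound in $L^\infty_t L^2_x$ and the compact embedding $L^2(\T^3)\hookrightarrow \dot{H}^{-1/2}(\T^3)$ via the Aubin-Lions lemma. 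Along a subsequence this upgrades the weak-$\ast$ convergence to $B_j\to B$ strongly in $C([0,T];\dot{H}^{-1/2})$. Since the bilinear form $(B,B')\mapsto \int \curl(-\Delta)^{-1}B \cdot B' dx$ is continuous on $\dot{H}^{-1/2}$, we obtain $\HH_{B_j,B_j}(t)\to \HH_{B,B}(t)$ uniformly in $t$; chaining this with (ii) forces $\HH_{B,B}(t)=\HH_{B,B}(0)$ for every $t$.

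The principal obstacle is step (i): the helicity identity must be secured for general Leray-Hopf weak solutions, which live precisely at the borderline regularity where pairing $A_j$ against $\partial_t B_j$ is delicate. The decisive structural point is that the exact cancellation $\int (u_j\times B_j)\cdot B_j\, dx\equiv 0$ is algebraic and survives mollification without generating a distributional defect — unlike what happens for kinetic energy at $L^3$, which is why $\HH_{B,B}$ requires strictly less regularity than $\EE$ to be conserved. Once this is in place, steps (ii) and (iii) reduce to energy bookkeeping and a standard compactness argument.
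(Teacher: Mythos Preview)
Your proposal is correct and follows essentially the same three-step route as the paper's proof in Appendix~\ref{app:Taylor}: a helicity balance for Leray--Hopf solutions, vanishing of the resistive term via the energy inequality, and passage to the limit by Aubin--Lions with the compact embedding $L^2\hookrightarrow \dot H^{-1/2}$. The only cosmetic difference is that the paper justifies the helicity balance by interpolating Leray--Hopf solutions into $L^{10/3}_{x,t}\subset L^3_{x,t}$ and invoking the known $L^3$ helicity-conservation argument, whereas you sketch a direct mollification pairing of $A_j$ against $\partial_t B_j$; both are valid at this regularity.
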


The proof of Theorem~\ref{thm:Taylor} given in~\cite{FaracoLindberg18b} (who also consider domains which are not simply connected) has three ingredients: Leray-Hopf weak solutions to \eqref{eq:MHD} have desirable properties which may be deduced from~\eqref{eq:energy:inequality}, the magnetic helicity is bounded as soon as $B \in L^\infty_t \dot{H}^{-1/2}_x$, and the fact $L^2\subset \dot{H}^{-1/2}$ is compact (we work with zero mean functions). We recall this argument in Appendix~\ref{app:Taylor} and note that similar proofs appear in the context of the 2D Euler equations~\cite{CLfNlS16} and of the 2D SQG equations~\cite{ConstantinIgnatovaNguyen18}.

In conclusion, we emphasize that there is a substantial {\em integrability/scaling discrepancy} between the results of~\cite{KangLee07,Aluie09,FaracoLindberg18a}, which consider the conservation of $\HH_{B,B}$ directly for weak solutions of  ideal MHD, and the result of Taylor's conjecture~\cite{FaracoLindberg18b}, which considers weak solutions to \eqref{eq:ideal:mhd} that arise as weak ideal limits from \eqref{eq:MHD}. The first set of results require $L^3_{x,t}$ integrability to guarantee that the magnetic helicity is constant in time, while the second result requires merely $L^\infty_t L^2_{x}$ integrability. Thus, there is additional hidden information in the definition of a weak ideal limit, a ghost of the energy inequality~\eqref{eq:energy:inequality}. Our goal in this paper is to show that this scaling discrepancy is real, by proving  that there exist  $L^\infty_t L^2_{x}$ weak solutions to ideal MHD which do not conserve magnetic helicity (see Section~\ref{sec:main:result} for details).

\subsection{Results and new ideas}
\label{sec:main:result}

In this paper we prove the existence of non-trivial non-conservative weak solutions to \eqref{eq:ideal:mhd} with finite kinetic energy. For clarity of the presentation, we only prove the simplest version of this statement:
\begin{theorem}[\bf Main result]
\label{thm:main}
There exists $\beta > 0$ such that the following holds. There exist     weak solutions $(u ,B) \in C([0,1], H^{\beta}) $ of  \eqref{eq:ideal:mhd}, in the sense of Definition~\ref{def:weak:sol}, which do not conserve magnetic helicity. In particular, there exist solutions as above with $2\abs{\HH_{B,B}(0)} \leq \HH_{B,B}(1)  $ and $\HH_{B,B}(1) > 0$. For these solutions the total energy $\EE$ and cross helicity $\HH_{\omega,B}$ are non-trivial non-constant functions of time.
\end{theorem}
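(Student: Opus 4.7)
My plan is to construct the solutions via a Nash-type convex integration scheme, inductively producing a sequence of smooth triples $(u_q, B_q, \mathring R_q^1, \mathring R_q^2)$ at frequency scale $\lambda_q$ that solve a relaxed MHD-Reynolds system: the momentum equation holds up to $\div \mathring R^1_q$, with $\mathring R^1_q$ symmetric trace-free, and the induction equation holds up to $\div \mathring R^2_q$, with $\mathring R^2_q$ skew-symmetric. Inductive bounds should control $\|\mathring R_q^i\|_{L^1}\lesssim \delta_{q+1}\to 0$ while keeping $(u_q,B_q)$ uniformly bounded in $C^0_t H^\beta_x$ for some small $\beta>0$, so that the telescoping sum $(u,B)=\lim_q (u_q,B_q)$ is a weak solution in the sense of Definition~\ref{def:weak:sol}. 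As in previous intermittent constructions, I would also include a temporal gluing stage that localizes each stress to disjoint time intervals before perturbing.

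The heart of the construction is the design of the perturbation $(w^u_{q+1},w^B_{q+1})$ that cancels both stresses to leading order. Following the intermittent convex integration philosophy, I would use anisotropic building blocks concentrated on thin tubes or boxes of small measure, oscillating at high frequency $\lambda_{q+1}$, with two building-block families. One family of ``velocity-type'' blocks combines in the quadratic form $w^u\otimes w^u - w^B\otimes w^B$, summed over a finite direction set, to reproduce $-\mathring R^1_q$ on average, via a geometric lemma for symmetric matrices. A second family of ``induction-type'' blocks produces $-\mathring R^2_q$ through the skew-symmetric combination $w^u\otimes w^B - w^B\otimes w^u$, which requires a new MHD-specific geometric lemma for antisymmetric matrices. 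Nonlinear, linear, and transport errors would be inverted by a Reynolds-type operator and absorbed into $\mathring R^i_{q+1}$, producing the next iterate.

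To drive the magnetic helicity change, I would polarize the magnetic building blocks helically, taking them to be approximate eigenfunctions of $\curl$ with a chosen sign. The principal contribution of the perturbation to $\HH_{B,B}(u_{q+1},B_{q+1})-\HH_{B,B}(u_q,B_q)$ is then the self-term
\[
\int_{\T^3}\curl^{-1} w^B_{q+1}\cdot w^B_{q+1}\,dx \;\approx\; \pm \lambda_{q+1}^{-1}\,\|w^B_{q+1}\|_{L^2}^2,
\]
together with cross terms involving $B_q$ and previous perturbations, which may be made negligible by the usual high-low frequency separation. By allocating a fraction of the amplitude to helical perturbations supported in a prescribed time sub-interval of $[0,1]$, one can force a definite-sign cumulative increment in $\HH_{B,B}$; starting from initial data with $\HH_{B,B}(0)$ small we obtain $\HH_{B,B}(1)>2|\HH_{B,B}(0)|>0$. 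The non-constancy of $\EE$ and $\HH_{\omega,B}$ is then arranged in parallel by independently prescribing the energy and cross-helicity profiles of the successive perturbations.

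The main obstacle is the scaling tension between three competing demands: (i) the intermittency must be strong enough that the building blocks are super-critical at the $L^3_{x,t}$ threshold appearing in the Kang--Lee/Aluie/Faraco--Lindberg magnetic helicity conservation results, so that $\HH_{B,B}$ is not a priori constant; (ii) the $L^2$ norms of the perturbations must remain summable and, in fact, fractionally summable in $H^\beta$ for some $\beta>0$, in order to produce a finite-energy weak solution with positive regularity; and (iii) the helical contribution $\lambda_{q+1}^{-1}\|w^B_{q+1}\|_{L^2}^2$ to $\HH_{B,B}$ at stage $q$ must not decay too fast, despite the Biot--Savart factor $\lambda_{q+1}^{-1}$ arising from the negative regularity of $\HH_{B,B}$. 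Tuning the anisotropy exponents and the frequency growth $\lambda_q$ to balance these three constraints, while simultaneously closing the inductive stress estimates for both the symmetric and skew-symmetric stresses (the latter being specific to MHD and not encountered in the Euler setting), is where I expect the bulk of the technical work to lie.
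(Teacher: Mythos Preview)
Your relaxed-MHD convex integration framework (symmetric and skew-symmetric stresses, two geometric lemmas, intermittent building blocks, $H^\beta$ summability) matches the paper's scheme closely.  However, the mechanism you propose for producing a non-constant magnetic helicity is \emph{not} the one the paper uses, and your proposed mechanism is precisely the one that runs into the scaling obstacle you yourself flag in point (iii).

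In the paper, the helicity change is not generated by the perturbations at all.  Instead, the zeroth iterate is taken to be an \emph{explicit} pair
\[
u_0 = \tfrac{t}{(2\pi)^{3/2}}(\sin(\lambda_0^{1/2} x_3),0,0),\qquad
B_0 = \tfrac{t}{(2\pi)^{3}}(\sin(\lambda_0^{1/2} x_3),\cos(\lambda_0^{1/2} x_3),0),
\]
for which the nonlinear terms vanish identically, the stresses $\mathring R_0^u,\mathring R_0^B$ are $O(\lambda_0^{-1/2})$, and the magnetic helicity is $\HH_{0,B,B}(t)=t^2/((2\pi)^3\lambda_0^{1/2})$, already non-constant.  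The proof then shows, using $d_{q+1}=\curl\curl(\sum a_{(k)}D_k^c)$, that the \emph{vector-potential} increments obey $\|\curl(-\Delta)^{-1}d_{q+1}\|_{L^2}\lesssim \ell^{-2}\lambda_{q+1}^{-1}$, so that $\|A-A_0\|_{L^2}$ and $\|B-B_0\|_{L^2}$ are small enough to keep $|\HH_{B,B}(t)-\HH_{0,B,B}(t)|\le \tfrac13\HH_{0,B,B}(1)$ for all $t$.  The helicity change of the limiting solution is thus inherited from $B_0$, not built by helical perturbations.  This completely sidesteps the tension in your point (iii): one never needs the quantity $\lambda_{q+1}^{-1}\|w^B_{q+1}\|_{L^2}^2\sim\lambda_{q+1}^{-1}\delta_{q+1}$ to be anything other than small.

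A second difference concerns the building blocks.  The paper does not use tubes or boxes; it uses \emph{intermittent shear flows} $W_{(k)}=\phi_r(\lambda r N_\Lambda k\cdot x)k_1$ and $D_{(k)}=\phi_r(\lambda r N_\Lambda k\cdot x)k_2$, which are supported on thickened planes and oscillate in a single direction $k$ orthogonal to both $k_1$ and $k_2$.  This one-dimensional intermittency is forced by the MHD algebra: to make $W_{(k)}\otimes D_{(k)}-D_{(k)}\otimes W_{(k)}$ divergence-free at leading order, the oscillation direction must be perpendicular to both output directions.  The high--high interactions between distinct wavevectors are then controlled by the key observation that the intersection of two non-parallel thickened planes has $2$D smallness, yielding $\|\phi_{(k)}\phi_{(k')}\|_{L^1}\lesssim r^2$.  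Your proposed tube/box blocks would need to be reconciled with this geometric constraint.  Finally, there is no temporal gluing stage in the paper; the amplitudes are simply smooth functions of the mollified stresses.
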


To the best of our knowledge Theorem~\ref{thm:main} provides the {\em first example} of a non-conservative weak solution to the ideal MHD equations, for which $\EE, \HH_{\omega,B}$ and $\HH_{B,B}$ are all non-trivial. A direct consequence of our result is the non-uniqueness of weak solutions to \eqref{eq:ideal:mhd} in the sense of Definition~\ref{def:weak:sol}. In fact, at this $L^\infty_t L^{2}_x$ regularity level, Theorem~\ref{thm:main} also gives the first {\em existence result} for weak solutions to \eqref{eq:ideal:mhd}, as the usual weak-compactness methods from smooth approximations fail, for the same reasons they fail in 3D Euler. In fact, we note that in view of Theorem~\ref{thm:Taylor}, the weak solutions of 3D ideal MHD which we construct in Theorem~\ref{thm:main} {\em cannot be obtained as weak ideal limits} from   3D viscous and resistive MHD.

The regularity of the weak solutions from Theorem~\ref{thm:main} is slightly better than $C^0_t L^2_x$, as the parameter $\beta$ is very small (as in~\cite{BV}). In view of the conservation of magnetic helicity in $C^0_t L^3_x$, and of the Sobolev embedding, any construction of non-conservative weak solutions in $H^\beta$ must have $\beta < 1/2$. However, it seems that fundamentally new ideas are needed to substantially increase the value of $\beta$ in Theorem~\ref{thm:main}. Additionally, making progress towards the flexible side of an Onsager conjecture for ideal MHD, i.e. to construct weak solutions in $B^\alpha_{3,\infty}$ with $0< \alpha < 1/3$ which do not conserve total energy seems out of reach of current methods (such solutions would need to conserve magnetic helicity, but not total energy). 

The proof of Theorem~\ref{thm:main} is based on a {\em Nash-style convex integration} scheme with {\em intermittent} building blocks adapted to the specific {\em geometry of the MHD system}. For the 3D Euler equations,  Scheffer~\cite{Scheffer93} and Shnirelman~\cite{Shnirelman97} first gave examples of wild solutions in $L^2_x$, respectively $L^\infty_x$, while De Lellis-Sz\'ekelyhidi~\cite{DLSZ09} have placed these constructions in a unified mathematical framework.
Convex integration schemes based on the ideas of Nash~\cite{Nash54} were first used in the context of the 3D Euler system by De Lellis-Sz\'ekelyhidi in the seminal work~\cite{DLSZ13}. A sequence of works~\cite{DeLellisSzekelyhidi12a,BDLISZ15,Buckmaster15,BDLSZ16,IsettOh16,DSZ17} further built on these ideas, leading to the resolution of the Onsager conjecture by Isett~\cite{Isett18,Isett17}. For  dissipative solutions, the proof of the flexible side of the Onsager conjecture was given by B.-De Lellis-Sz\'ekelyhidi-V.~\cite{BDLSV19}  (see~\cite{DLSZ17,BV19} for recent reviews). Nash-style convex integration schemes in H\"older spaces were also applied to other classical hydrodynamic models~\cite{IsettVicol15,BSV16,CDLDR17,Novack18}. The last two authors' work~\cite{BV} introduced intermittent building blocks in a $L^2$-based convex integration scheme in order to construct weak solutions of the 3D Navier-Stokes equations (3D NSE) with prescribed kinetic energy. These ideas were further developed in~\cite{BCV18} to construct intermittent weak solutions of 3D NSE with partial regularity in time, in~\cite{LuoTiti18,BCV18} for the hyperdissipative problem, in~\cite{Luo18,CheskidovLuo19} for the stationary problem, and in~\cite{Dai18} to treat the Hall-MHD system. We note that Dai's~\cite{Dai18} non-uniqueness result fundamentally relies on the presence of the Hall term $\curl ( \curl B \times B)$ which is of highest order and is not present in the ideal MHD system. We refer to the review papers~\cite{DLSZ17,DLSZ19,BV19} for further references.

The main difficulties in proving Theorem~\ref{thm:main} arise from the specific geometric structure of 3D MHD  which we describe next, along with the main new ideas used to overcome them. 
First, the intermittent constructions developed in the context of 3D NSE~\cite{BV,BCV18,CheskidovLuo19}, more specifically the building blocks of these constructions (intermittent Beltrami flows, intermittent jets, respectively viscous eddies), are not applicable to the ideal MHD system. Informally speaking, for 3D NSE one requires building blocks with {\em more than 2D intermittency}, whereas the geometry of the nonlinear terms of 3D MHD system requires the building blocks' direction of oscillation to be orthogonal to two direction vectors, only permitting  the usage of {\em 1D intermittency}  (co-dimension $2$). In particular, our construction does not work for the 2D MHD system, as expected~\cite{FaracoLindberg18a}. 
Our solution is based on constructing (see Section~\ref{sec:intertmittent:section}) a set of intermittent building blocks adapted to this geometry, which we call {\em intermittent shear velocity flows} and {\em intermittent shear magnetic flows}. Their spatial support is given by a thickened plane spanned by two orthogonal vectors $k_1$ and $k_2$, whereas their only direction of oscillation is given by a vector $k$ which is orthogonal to both $k_1$ and $k_2$. The second fundamental difference is that in 3D NSE intermittency is only used to treat the {\em linear term} $\Delta u$, as an error term. In the case of 3D ideal MHD it turns out that  intermittency is used to treat the {\em nonlinear oscillation terms}.  Due to the two dimensional nature of their support, the interaction of different intermittent shear flows is not small when measured using the usual techniques. At this point intermittency plays a key role: we note that the product of two rationally-skew-oriented 1D intermittent building blocks is {\em more intermittent than each one of them}: it has 2D intermittency because the intersection of two thickened (nonparallel) planes is given by a thickened line, which has 2D smallness.
 
We remark that a similar method to the one outlined here, combined with suitable localization arguments, should be able to yield the existence of weak solutions to ideal 3D MHD which have compact support in physical space and which do not conserve magnetic helicity (see~\cite{Gavrilov19,CLV19} for the construction of smooth and of rough solutions to {\em steady} ideal MHD with compact support). Such a construction would permit the treatment of non-simply-connected domains, an important geometry in plasma physics (e.g.~tokamaks). 

We also note that the construction given in this paper describes an algorithm with very explicit steps. Moreover, as opposed to Euler convex integration schemes, one does not need to numerically solve a large number of transport equations, which is computationally costly. It would be very interesting to implement the  construction given below on a computer, and to visualize the emerging intermittent MHD structures. 

\section{Outline of the paper}

The proof of Theorem \ref{thm:main} relies  on constructing solutions $(u_q, B_q, \mathring{R}_q^u, \mathring{R}_q^B)$ for every integer $q \geq 0$ to the following relaxation of \eqref{eq:ideal:mhd}: 
\begin{subequations}
\label{relax mhd}
\begin{align}
&\partial_t u_q + \div( u_q \otimes u_q - B_q \otimes B_q)  + \nabla p_q = \div \mathring{R}_q^u 
\label{relax mhd 1}\\
&\partial_t B_q + \div( u_q \otimes B_q - B_q \otimes u_q)  = \div \mathring{R}_q^B
\label{relax mhd 2}\\
& \div u_q = \div B_q =  0
\label{relax mhd div free}
\end{align}
\end{subequations}
where $\mathring{R}_q^u$ is a symmetric traceless $3 \times 3$ matrix which we  call the {\em Reynolds stress} and $\mathring{R}_q^B$ is a skew-symmetric $3 \times 3$ matrix which we call the {\em magnetic stress}. We recover the pressure $p_q$ by solving the equation $\Delta p_q = \div \div (-u_q \otimes u_q +B_q \otimes B_q +\mathring{R}_q^u)$ with $\int_{\mathbb{T}^3} p_q dx = 0$. We construct solutions to \eqref{relax mhd} such that the Reynolds and magnetic stresses go to zero in a particular way as $q\to \infty$, so that in the limit we obtain a weak solution of \eqref{eq:ideal:mhd}. 

In order to quantify the convergence of the stresses we introduce a frequency parameter $\lambda_q$ and an amplitude parameter $\delta_q$ defined as follows: 
\begin{equation}
\label{frequency and amplitude parameters}
    \lambda_q = a^{(b^q)} \qquad \mbox{and}  \qquad \delta_q = \lambda_q^{-2\beta} 
\end{equation}
where $\beta > 0$ is  a (very small) regularity parameter  and  $a, b \in \mathbb{N}$ are both large.
By induction, we will assume the following bounds on the solution of \eqref{relax mhd} at level $q$:
\begin{align}
\label{mag inductive assumptions}
\norm{B_q}_{L^2} &\leq 1 - \delta_{q}^{\frac{1}{2}}, \qquad \norm{B_q}_{C_{x,t}^1} \leq \lambda_q^2, \qquad  \norm{ \mathring{R}_q^B }_{L^1} \leq c_B \delta_{q+1} , \\ 
\label{vel inductive assumptions}
\norm{u_q}_{L^2} &\leq 1 - \delta_{q}^{\frac{1}{2}},  \qquad \norm{u_q}_{C_{x,t}^1}  \leq \lambda_{q}^2, \qquad \norm{\mathring{R}_q^u}_{L^1} \leq c_u \delta_{q+1} \,.
\end{align}
The constants $c_u$ and $c_B$ are universal: $c_u$ only depends on fixed geometric quantities, and $c_B$ depends on $c_u$ and other geometric quantities. We can assume that $c_u, c_B \leq 1$. 
We note that, unless otherwise stated, $\norm{f}_{L^p}$ will be used as shorthand for $\norm{ f}_{L_t^{\infty} ((-T, T);L_x^{p}(\mathbb{T}^3))}$. Moreover, we write $\norm{f}_{C^1_{x,t}}$ to denote $\norm{f}_{L^\infty} + \norm{\nabla f}_{L^\infty} + \norm{\partial_t f}_{L^\infty}$.

\begin{proposition}[\bf Main Iteration]
\label{prop:main:iteration}
There exist constants $\beta  \in (0,1)$ and  $a_0 = a_0( \beta, c_B, c_u)$ such that for any natural number $a \geq a_0$ there exist functions $(u_{q+1}, \mathring{R}_{q+1}^{u}, B_{q+1}, \mathring{R}_{q+1}^B )$ which solve \eqref{relax mhd} and satisfy \eqref{mag inductive assumptions} and \eqref{vel inductive assumptions} at level $q+1$. Furthermore, they satisfy 
\begin{equation}
\label{main inductive assumption}
\norm{u_{q+1} - u_q }_{L^2} \leq \delta_{q+1}^{\frac{1}{2}} \qquad \text{  and  } \qquad \norm{B_{q+1} - B_q }_{L^2} \leq \delta_{q+1}^{\frac{1}{2}} \,. 
\end{equation}
\end{proposition}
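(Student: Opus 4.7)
The plan is to execute one step of a Nash-type convex integration iteration. Set $u_{q+1} = u_\ell + w_{q+1}$ and $B_{q+1} = B_\ell + d_{q+1}$, where $(u_\ell, B_\ell, \mathring{R}_\ell^u, \mathring{R}_\ell^B)$ denotes a joint space-time mollification of $(u_q, B_q, \mathring{R}_q^u, \mathring{R}_q^B)$ at a scale $\ell$ chosen as a small positive power of $\lambda_q^{-1}$ but still much larger than $\lambda_{q+1}^{-1}$. The commutator-type mollification errors are controlled using the inductive $C^1_{x,t}$ bound $\lambda_q^2$, which is precisely the reason for tracking that bound; in return, the mollified fields satisfy uniform higher-derivative bounds of the form $\|D^N u_\ell\|_{L^\infty} + \|D^N B_\ell\|_{L^\infty} \lesssim \ell^{-N+1}\lambda_q^2$, used throughout to estimate transport and Nash errors.

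The perturbations are then built from the intermittent shear velocity and magnetic flows $W_k, D_k$ constructed in Section~\ref{sec:intertmittent:section}. For a finite admissible set $\Lambda$ of triples $(k,k_1,k_2)$ satisfying $k_1\perp k_2$ and $k\perp k_1, k_2$, these are divergence-free, mean-zero vector fields supported on thickened planes spanned by $k_1, k_2$ and oscillating only in the direction $k$ at frequency $\lambda_{q+1}$. One writes the principal perturbations as
\[
w^{(p)}_{q+1}(x,t) = \sum_{k\in\Lambda} a^u_k(x,t)\, W_k(x), \qquad d^{(p)}_{q+1}(x,t) = \sum_{k\in\Lambda} a^B_k(x,t)\, D_k(x),
\]
and chooses the amplitudes $a^u_k, a^B_k$ via two algebraic geometric lemmas: one that decomposes a trace-adjusted version of $-\mathring{R}_\ell^u$ into the symmetric tensors produced by the mean of $W_k\otimes W_k - D_k\otimes D_k$, and one that decomposes $-\mathring{R}_\ell^B$ into the skew-symmetric tensors produced by the mean of $W_k\otimes D_k - D_k\otimes W_k$. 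Small-order divergence-free correctors $w^{(c)}_{q+1}, d^{(c)}_{q+1}$ are added so that the total perturbation is divergence free and mean zero, and the coefficients are sized so that $\|w_{q+1}\|_{L^2}, \|d_{q+1}\|_{L^2} \leq \delta_{q+1}^{1/2}$, yielding \eqref{main inductive assumption} and, for $b$ large enough that $4\delta_{q+1}\leq \delta_q$, propagating the energy bounds in \eqref{mag inductive assumptions}--\eqref{vel inductive assumptions}.

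The new stresses $\mathring{R}_{q+1}^u, \mathring{R}_{q+1}^B$ are defined by applying symmetric and skew-symmetric inverse-divergence operators to the error terms produced by substituting $u_{q+1}, B_{q+1}$ into \eqref{relax mhd}. These errors split in the standard way into a mollification error, oscillation errors from the high-frequency parts of the quadratic self-interactions of $(w_{q+1}, d_{q+1})$, transport errors $(\partial_t + u_\ell\cdot\nabla)(w_{q+1}, d_{q+1})$, Nash-type errors from the perturbations acting on $u_\ell, B_\ell$ and vice versa in both equations, and corrector error. Each is estimated in $L^1$ using the $L^p$ intermittency profile of $W_k, D_k$ (which is large for $p<2$ and small for $p>2$), non-stationary phase for the inverse-divergence on functions oscillating at frequency $\lambda_{q+1}$, and the derivative bounds above. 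Choosing first $\beta$ small, then $b$ large, then $a\geq a_0(\beta,c_u,c_B)$ large, all errors close as $c_u\delta_{q+2}$ and $c_B\delta_{q+2}$. The new $C^1_{x,t}$ bound follows from differentiating the principal perturbation and using that one derivative costs $\lambda_{q+1}$ while the amplitude is a controlled power of $\delta_{q+1}^{1/2}$ much smaller than $\lambda_{q+1}$.

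The hardest step is the oscillation error. As emphasized in Section~\ref{sec:main:result}, the geometry of MHD only permits 1D-intermittent (co-dimension two) building blocks, so any two non-parallel shear flows $W_k, W_{k'}$ have overlapping thickened-plane supports and a naïve $L^1$ estimate on the product $W_k W_{k'}$ does not give enough smallness to close the scheme. The essential mechanism is that for non-parallel $k,k'$ the intersection of the two thickened planes is a thickened line, so the product automatically gains an extra factor of intermittent smallness (an effective ``2D intermittency''). Combined with a careful design of $\Lambda$ that simultaneously supports both the symmetric and the skew-symmetric geometric decompositions and ensures that the phases $k+k'$ of all relevant parallel cross terms are nonzero (so the inverse divergence can distribute a gain of $\lambda_{q+1}^{-1}$), this is what makes it possible for the inverse-divergence estimates to close despite having only 1D intermittency available.
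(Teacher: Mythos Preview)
Your outline captures the overall architecture correctly, and you have identified the decisive new idea: for $k\neq k'$ the supports of $\phi_{(k)}$ and $\phi_{(k')}$ intersect in a thickened line, so $\|\phi_{(k)}\phi_{(k')}\|_{L^1}\lesssim r^2$ gives the extra smallness needed for the non-diagonal oscillation terms. That is indeed the heart of the matter.

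There is, however, a genuine gap in your algebraic setup. You propose a single index set $\Lambda$ with two independent amplitude families $a_k^u, a_k^B$, to be fixed by ``two geometric lemmas'': one for the symmetric tensor from $W_k\otimes W_k - D_k\otimes D_k$ and one for the skew tensor from $W_k\otimes D_k - D_k\otimes W_k$. But these two constraints are \emph{coupled}: the diagonal of $w^{(p)}\otimes d^{(p)} - d^{(p)}\otimes w^{(p)}$ involves the products $a_k^u a_k^B$, while the diagonal of $w^{(p)}\otimes w^{(p)} - d^{(p)}\otimes d^{(p)}$ involves $(a_k^u)^2$ and $(a_k^B)^2$ separately. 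You cannot choose the first to cancel $\mathring R_\ell^B$ and independently choose the second to cancel $\mathring R_\ell^u$; the two systems share unknowns. The paper resolves this by a specific decoupling: take two \emph{disjoint} sets $\Lambda_B$ and $\Lambda_u$, let the magnetic perturbation use only $\Lambda_B$ with a single amplitude $a_{(k)}$ (so $a_k^u=a_k^B$ there), and let the velocity perturbation use $\Lambda_u\cup\Lambda_B$. The $\Lambda_B$ part of $w^{(p)}\otimes w^{(p)}-d^{(p)}\otimes d^{(p)}$ then produces an explicit traceless symmetric error $\mathring G^B=\sum_{k\in\Lambda_B}a_{(k)}^2(k_1\otimes k_1-k_2\otimes k_2)$ of size $\delta_{q+1}$, and the $\Lambda_u$ amplitudes are chosen (via the second geometric lemma, which decomposes into rank-one tensors $k_1\otimes k_1$, not $k_1\otimes k_1-k_2\otimes k_2$) to cancel $\mathring R_\ell^u+\mathring G^B$. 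Without this mechanism your two ``independent'' lemmas do not close.

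A smaller point: there are no transport errors here in the Euler sense. The building blocks are stationary, so $\partial_t w_{q+1}$ and $\partial_t d_{q+1}$ carry only the time derivative of the amplitudes $a_{(k)}$ and are estimated directly after one curl is absorbed by the inverse divergence; the $u_\ell\otimes d_{q+1}$, $B_\ell\otimes w_{q+1}$ cross terms are bounded in $L^1$ using the intermittent smallness $\|\phi_{(k)}\|_{L^1}\lesssim r^{1/2}$, not by any material-derivative cancellation. Likewise, for the $k\neq k'$ oscillation terms the paper does \emph{not} apply an inverse divergence and gain $\lambda_{q+1}^{-1}$; those terms are left as they are and estimated in $L^1$ directly via the product bound. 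The inverse-divergence gain is used only on the diagonal high-frequency pieces $a_{(k)}^2\,\mathbb P_{\neq 0}(\phi_{(k)}^2)$.
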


Sections~\ref{mollification}--\ref{stress} contain the proof of Proposition~\ref{prop:main:iteration}, while the proof of Theorem~\ref{thm:main} is given in Section~\ref{final proof}. 

\section{Mollification}
\label{mollification} 
It is convenient to mollify the velocity and the magnetic field to avoid the loss of derivatives problem. 
Let $\phi_{\epsilon}$ be a family of standard Friedrichs mollifiers on $\mathbb{R}^3$ and let $\varphi_{\epsilon}$ be a family of standard Friedrichs mollifiers on $\mathbb{R}$. Define a mollification of $u_q, B_q, \mathring{R}_q^u, \text{ and, } \mathring{R}_q^B$ in space and time at length scale $\ell$ by
\begin{align*}
u_{\ell} := (u_q *_x \phi_{\ell}) *_t \varphi_{\ell} &\qquad \text{ and }  \qquad B_{\ell} := (B_q *_x \phi_{\ell}) *_t \varphi_{\ell}\\
\mathring{R}_{\ell}^{u} := (\mathring{R}_q^u *_x \phi_{\ell}) *_t \varphi_{\ell} &\qquad \text{ and } \qquad 
\mathring{R}_{\ell}^{B} := (\mathring{R}_q^B *_x \phi_{\ell}) *_t \varphi_{\ell} \, .
\end{align*}
Using \eqref{relax mhd 1} and \eqref{relax mhd 2}, $(u_{\ell},  \mathring{R}_{\ell}^u)$ and $(B_{\ell}, \mathring{R}_{\ell}^B)$ satisfy
\begin{subequations}
\label{mollified relaxed imhd}
\begin{align}
&\partial_t u_{\ell} + \div ( u_{\ell} \otimes u_{\ell}  - B_{\ell} \otimes B_{\ell} ) + \nabla p_{\ell} = \div (\mathring{R}_{\ell}^u + \mathring{R}_{comm}^u)
\label{mollified velo eq}\\
&\partial_t B_{\ell} + \div ( u_{\ell} \otimes B_{\ell}  - B_{\ell} \otimes u_{\ell} )  = \div (\mathring{R}_{\ell}^B + \mathring{R}_{comm}^B)
\label{mollified mag eq}\\
&\div u_{\ell} =  \div B_{\ell} = 0
\end{align}
\end{subequations}
where the traceless symmetric commutator stress $\mathring{R}_{comm}^u$ and the skew-symmetric commutator stress $\mathring{R}_{comm}^B$ are  given by 
\begin{align*}
\mathring{R}_{comm}^u &= (u_{\ell}\mathring{\otimes} u_{\ell}) - (B_{\ell}\mathring{\otimes} B_{\ell}) - (( u_q \mathring{\otimes} u_q - B_q \mathring{\otimes} B_q )*_x \phi_{\ell} ) *_t \varphi_{\ell} \, , \\
\mathring{R}_{comm}^B &= u_{\ell} \otimes B_{\ell} - B_{\ell} \otimes u_{\ell} - ((u_q \otimes B_q - B_q \otimes u_q) *_x \phi_{\ell} ) *_t \varphi_{\ell} \, ,
\end{align*}
and $p_{\ell}$ is defined as 
\begin{equation*}
    p_{\ell} = (p_q *_x \phi_{\ell}) *_t \varphi_{\ell} - |u_{\ell}|^2 + |B_{\ell}|^2 + (|u_q|^2 - |B_{q}|^2)*_x \phi_{\ell} ) *_t \varphi_{\ell} \, .
\end{equation*}

Using standard mollification estimates and \eqref{mag inductive assumptions}--\eqref{vel inductive assumptions} we have the following estimates for $\mathring{R}_{\ell}^B$ and $\mathring{R}_{\ell}^u$:
\begin{equation}
\label{est:mollified:stress}
\norm{\nabla^M \mathring{R}_{\ell}^u}_{L^1} + \norm{ \nabla^M \mathring{R}_{\ell}^B}_{L^1}  \lesssim \ell^{-M}\delta_{q+1} \,.
\end{equation}
For $\mathring{R}_{comm}^B$ we use the double commutator estimate from \cite{ConstantinETiti94} and the inductive estimates \eqref{mag inductive assumptions}--\eqref{vel inductive assumptions} to conclude
\begin{equation}
\label{mag commutator estimate}
\norm{\mathring{R}_{comm}^B}_{L^1} \lesssim \norm{\mathring{R}_{comm}^B}_{C^0} \lesssim \ell^2 \norm{B_q}_{C_{x,t}^1}\norm{ u_q}_{C_{x,t}^1}  \lesssim  \ell^2 \lambda_q^{4} \,. 
\end{equation}
Since $u_q$ and $B_q$ satisfy the same inductive estimates, we have the same bound from \eqref{mag commutator estimate}:
\begin{equation}
\label{vel commutator estimate}
\norm{\mathring{R}_{comm}^u}_{L^1} \lesssim \ell^2 \lambda_{q}^4\, .
\end{equation}
We will choose the mollification length scale so that both \eqref{mag commutator estimate} and \eqref{vel commutator estimate} are  less than $\delta_{q+2}$: using  \eqref{frequency and amplitude parameters} this  implies that $\ell$ must satisfy  
\begin{equation}
\label{mollification upperbound}
    \ell \ll \lambda_{q+1}^{-\frac{2}{b} - \beta b}.
\end{equation}
If we define $\ell$ as 
\begin{equation*}
    \ell := \lambda_{q+1}^{-\eta} 
\end{equation*}
then \eqref{mollification upperbound} translates into  $\eta >  \frac{2}{b} + \beta b$.

\begin{remark}
The implicit constants appearing in \eqref{mag commutator estimate} and \eqref{vel commutator estimate}, as well as later inequalities in this paper, will depend on the mollifiers, $N_{\Lambda}$ (see Remark~\ref{Universal constant}), $\Phi$ (see Section~\ref{sec:intertmittent:section}), and various other geometric quantities. In particular, none of the implicit constants will depend on $q$. By taking $a$ to be sufficiently large we will be able to use a small power of $\lambda_{q+1}$ to absorb the implicit constants and have bonafide inequalities.    
\end{remark}

\section{Linear Algebra}
As with previous convex integration schemes, we construct perturbations to add to the velocity and magnetic fields to reduce the size of the stresses. The following two lemmas are an important part of designing the perturbations so that this cancellation of the previous stress occurs. The proofs are  given in Appendix~\ref{Proof of Geometric lemmas}.

\begin{lemma}[\bf First Geometric Lemma]
\label{geometric lem 1}
There exists a set $\Lambda_B \subset S^2 \cap \mathbb{Q}^3$ that consists of vectors $k$ with associated orthonormal bases $(k, k_1, k_2)$,  $\varepsilon_B > 0$, and smooth positive functions $\gamma_{(k)}: B_{\varepsilon_B}(0) \to \mathbb{R}$, where $B_{\varepsilon_B}(0)$ is the ball of radius $\varepsilon_B$ centered at 0 in the space of $3 \times 3$ skew-symmetric matrices, such that for  $A \in B_{\varepsilon_B}(0)$ we have the following identity: 
\begin{equation}
\label{antisym}
A = \sum_{k \in \Lambda_B} \gamma_{(k)}^2(A) (k_1 \otimes k_2 - k_2 \otimes k_1)  \,.
\end{equation}
\end{lemma}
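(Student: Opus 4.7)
\medskip

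\noindent\textbf{Proof proposal.} The plan is to use the inverse function theorem around a strictly positive base point. Observe that $\dim \text{Skew}(3) = 3$ and that for each orthonormal frame $(k,k_1,k_2)$, the matrix $M_k := k_1 \otimes k_2 - k_2 \otimes k_1$ lies in $\text{Skew}(3)$. Under the standard identification $\text{Skew}(3) \simeq \R^3$ sending $\omega$ to the matrix acting as $v \mapsto \omega \times v$, the element $M_k$ corresponds (up to a sign depending on orientation) to $\pm k$. So if the matrices $\{M_k\}_{k \in \Lambda_B}$ are to span $\text{Skew}(3)$, it suffices that the vectors $\{k\}_{k \in \Lambda_B}$ span $\R^3$; and to guarantee a strictly positive combination of the $M_k$ that equals zero, it suffices to have a set that positively spans.

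First I would fix a rational orthonormal basis $\{v_1,v_2,v_3\}$ of $\R^3$, which exists by an explicit Pythagorean construction, e.g.\ $v_1 = (3/5, 4/5, 0)$, $v_2 = (4/5, -3/5, 0)$, $v_3 = (0,0,1)$. Define
\[
\Lambda_B := \{\, \pm v_1, \pm v_2, \pm v_3 \,\} \subset S^2 \cap \Q^3 .
\]
For each $k \in \Lambda_B$ with $k = \pm v_i$, choose the orthonormal frame $(k, k_1, k_2) := (\pm v_i, v_j, v_l)$ where $(i,j,l)$ is a cyclic permutation of $(1,2,3)$, with the ordering flipped for the negative sign so that $M_{-v_i} = -M_{v_i}$. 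In particular $M_{v_i} + M_{-v_i} = 0$, so $\sum_{k \in \Lambda_B} 1 \cdot M_k = 0$, and $\{M_{v_1}, M_{v_2}, M_{v_3}\}$ forms a basis of $\text{Skew}(3)$ (because under the isomorphism above they correspond to the basis $\pm v_1, \pm v_2, \pm v_3$ up to sign).

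Next, define the smooth map
\[
F : \R^{|\Lambda_B|} \longrightarrow \text{Skew}(3), \qquad F(\gamma) := \sum_{k \in \Lambda_B} \gamma_k^2 \, M_k .
\]
Then $F(\mathbf{1}) = 0$, and the differential at $\gamma = \mathbf{1}$ is $DF(\mathbf{1})(\eta) = 2 \sum_k \eta_k M_k$, which is surjective onto $\text{Skew}(3)$ since already the three components indexed by $v_1, v_2, v_3$ map onto a basis. By the implicit function theorem (applied after restricting to a three-dimensional affine subspace of $\R^{|\Lambda_B|}$ transverse to $\ker DF(\mathbf{1})$, or by the parametric surjective form of IFT), there exist smooth functions $\gamma_{(k)} : B_{\varepsilon_B}(0) \to \R$ with $\gamma_{(k)}(0) = 1$ and $\sum_{k \in \Lambda_B} \gamma_{(k)}(A)^2 M_k = A$ for all $A$ in a small ball $B_{\varepsilon_B}(0) \subset \text{Skew}(3)$. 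Since $\gamma_{(k)}(0) = 1 > 0$ and $\gamma_{(k)}$ is continuous, shrinking $\varepsilon_B$ if necessary ensures $\gamma_{(k)}(A) > 0$ throughout $B_{\varepsilon_B}(0)$, completing the proof.

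The proof is essentially a soft application of the implicit function theorem; the only genuinely content-bearing steps are the identification of the relevant three-dimensional target and the explicit exhibition of a rational set $\Lambda_B$ whose associated $M_k$ both span and admit the zero combination with strictly positive coefficients. The main (mild) obstacle is simply verifying that one can work with a rational set of orthonormal frames rather than arbitrary real ones, which is handled by the explicit Pythagorean choice above; the subsequent linear algebra and implicit function theorem argument are entirely routine.
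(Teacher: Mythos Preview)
Your proof is correct for the lemma as stated, but it differs from the paper's in two respects. The paper takes $\Lambda_B = \{e_1, e_2, e_3, \tfrac{3}{5}e_1 + \tfrac{4}{5}e_2, -\tfrac{4}{5}e_2 - \tfrac{3}{5}e_3\}$ (five vectors, no antipodal pairs), exhibits an explicit positive linear relation $\tfrac{7}{4}A_1 + \tfrac{11}{3}A_2 + A_3 + \tfrac{35}{12}A_4 + \tfrac{5}{3}A_5 = 0$ among the associated skew matrices, and then writes the $\gamma_{(k)}$ in closed form as square roots with an explicit $\varepsilon_B = 1$; no implicit function theorem is needed. Your antipodal-pair construction makes the zero relation trivial ($M_k + M_{-k} = 0$), but you then invoke the IFT where an explicit formula such as $\gamma_{\pm v_i}^2 = 1 \pm c_i$ would already do the job, so your argument is softer where the paper's is concrete.

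There is, however, a genuine compatibility issue with the rest of the construction. Your $\Lambda_B$ contains antipodal pairs $\{k,-k\}$, and the intermittent building blocks $\phi_{(k)}$ introduced later depend on $x$ only through $k\cdot x$. Consequently $\phi_{(k)}$ and $\phi_{(-k)}$ are supported on thickened planes orthogonal to the \emph{same} direction, so their product is not confined to thickened lines and the crucial product estimate (Lemma~\ref{product estimate lemma}, giving $\|\phi_{(k)}\phi_{(k')}\|_{L^1}\lesssim r$ for $k\neq k'$) fails when $k'=-k$. The paper's five non-antipodal directions are chosen precisely so that any two distinct $k,k'$ determine genuinely transverse planes; your set proves the geometric lemma but would break the oscillation-error bound downstream.
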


\begin{lemma}[\bf Second Geometric Lemma]
\label{geometric lem 2}
There exists a set $\Lambda_u \subset S^2 \cap \mathbb{Q}^3$ that consists of vectors $k$ with associated orthonormal bases $(k, k_1, k_2)$,  $\varepsilon_u > 0$, and smooth positive functions $\gamma_{(k)}: B_{\varepsilon_u}(\Id) \to \mathbb{R}$, where $B_{\varepsilon_u}(\Id)$ is the ball of radius $\varepsilon_u$ centered at the identity in the space of $3 \times 3$ symmetric matrices,  such that for  $S \in B_{\varepsilon_u}(\Id)$ we have the following identity:
\begin{equation}
\label{sym}
S = \sum_{k \in \Lambda_u} \gamma_{(k)}^2(S) k_1 \otimes k_1   \,.
\end{equation} 
Furthermore, we may choose $\Lambda_u$ such that $\Lambda_B \cap \Lambda_u = \emptyset$. 
\end{lemma}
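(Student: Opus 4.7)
The plan is to construct $\Lambda_u$ in three stages: first exhibit enough rational orthonormal triples so that the rank-one symmetric tensors $\{k_1 \otimes k_1\}$ span the 6-dimensional space of symmetric $3 \times 3$ matrices and have $\Id$ in the interior of their positive convex cone; then invert the resulting linear map to get smooth coefficients; and finally take square roots.

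First I would use the fact that rational orthonormal triples are dense in $SO(3)$ (for instance, via the Cayley parametrization, which produces orthogonal matrices with rational entries out of rational skew-symmetric matrices). Thus for any $v \in S^2$ and any $\epsilon > 0$ there exists $k \in S^2 \cap \Q^3$ with $|k - v| < \epsilon$ which extends to an orthonormal basis $(k, k_1, k_2)$ of $\Q^3$. Now observe that the identity matrix lies in the interior of the positive convex cone generated by $\{v \otimes v : v \in S^2\}$ inside the space of symmetric matrices: indeed $\Id = e_1 \otimes e_1 + e_2 \otimes e_2 + e_3 \otimes e_3$, and small perturbations of $\Id$ can be written as $\sum_j c_j v_j \otimes v_j$ by perturbing the $v_j$ slightly away from the standard basis. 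By a compactness/density argument one can then choose a \emph{finite} set $\Lambda_u \subset S^2 \cap \Q^3$ of vectors with rational orthonormal completions $(k, k_1, k_2)$ such that $\{k_1 \otimes k_1 : k \in \Lambda_u\}$ spans the space of symmetric matrices and $\Id$ still lies in the interior of its positive cone.

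Second, since the span condition holds, the linear map $\Phi : \R^{|\Lambda_u|} \to \mathrm{Sym}_3$, $(\lambda_k) \mapsto \sum_k \lambda_k \, k_1 \otimes k_1$, is surjective and admits a linear right inverse $\Psi : \mathrm{Sym}_3 \to \R^{|\Lambda_u|}$. The vector $\Psi(\Id)$ has strictly positive components, as $\Id$ lies in the interior of the positive cone of the generators (one may, if necessary, average $\Psi$ with a positive solution produced by Carath\'eodory's theorem to arrange this). By continuity, $\Psi(S)$ still has strictly positive components for all $S$ in some small ball $B_{\varepsilon_u}(\Id)$. Define $\gamma_{(k)}(S) := \sqrt{(\Psi(S))_k}$; these are smooth (in fact real-analytic) and positive on $B_{\varepsilon_u}(\Id)$, and by construction $\sum_k \gamma_{(k)}^2(S)\, k_1 \otimes k_1 = \Phi(\Psi(S)) = S$, which is the desired identity \eqref{sym}.

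The disjointness condition $\Lambda_u \cap \Lambda_B = \emptyset$ is the only nontrivial bookkeeping point, but it causes no real difficulty: $\Lambda_B$ is a fixed finite set produced by Lemma~\ref{geometric lem 1}, whereas the density of rational orthonormal triples means that any finite family chosen above can be perturbed slightly within $S^2 \cap \Q^3$ to avoid $\Lambda_B$ while preserving the open conditions (spanning and $\Id$ in the interior of the positive cone). Thus the main technical content is really the construction of finitely many rational orthonormal triples with rational second leg $k_1$; everything else reduces to open properties of positive linear combinations of rank-one symmetric matrices near $\Id$, which is the standard ingredient in classical geometric lemmas used in convex integration for the Euler equations.
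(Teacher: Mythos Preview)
Your argument is correct but takes a genuinely different route from the paper. The paper gives a completely explicit construction: it sets $\Lambda_u$ equal to six concrete vectors in $S^2\cap\Q^3$ built from the Pythagorean triple $5$--$12$--$13$ (e.g.\ $\frac{12}{13}e_1\pm\frac{5}{13}e_2$, etc.), writes down the associated $k_1$'s, checks the single identity $\sum_{k\in\Lambda_u}\tfrac12\,k_1\otimes k_1=\Id$, and then invokes the implicit function theorem (citing \cite{DLSZ13,BDLISZ15}) to get smooth positive coefficients on a ball around $\Id$. Disjointness from $\Lambda_B$ is immediate because both sets are written down explicitly.

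Your approach instead argues abstractly: density of rational orthonormal frames (via Cayley), openness of the spanning and interior-of-cone conditions, and a linear right inverse adjusted to be positive at $\Id$. This is perfectly valid and more robust---it would work for any finite $\Lambda_B$ and in any dimension---but it does not deliver explicit constants. In this paper that matters: the explicit $\Lambda_u$ feeds directly into the universal constant $N_\Lambda$ of Remark~\ref{Universal constant} (they note $N_\Lambda=65$ works) and into the bound $M_*$ of \eqref{M bound}, both of which are used quantitatively throughout the iteration. So the paper's concrete choice buys explicit control of all downstream constants, while your argument buys generality at the cost of leaving those constants implicit. One small wording issue: your sentence ``$\Psi(\Id)$ has strictly positive components, as $\Id$ lies in the interior of the positive cone'' conflates existence of a positive preimage with positivity of a \emph{particular} right inverse; your parenthetical fix is the right repair, but it would be cleaner to simply define $\Psi(S)=\lambda^0+\Psi_0(S-\Id)$ for a fixed positive solution $\lambda^0$ at $\Id$ and any linear right inverse $\Psi_0$.
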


\begin{remark}
\label{Universal constant}
By our choice of $\Lambda_B$ and $\Lambda_u$ and the associated orthonormal bases, there exists $N_{\Lambda} \in \mathbb{N}$ with
\begin{equation*}
\{ N_{\Lambda} k,N_{\Lambda}k_1 , N_{\Lambda}k_2 \} \subset N_{\Lambda} \mathbb{S}^2 \cap \mathbb{Z}^3.
 \end{equation*} 
For instance, $N_{\Lambda} = 65$ suffices.
\end{remark}

\begin{remark}
Let $M_*$ be a  geometric constant such that
\begin{align}
\sum_{k \in \Lambda_{u}} \norm{\gamma_{(k)}}_{C^1(B_{\varepsilon_u}(\Id))} + \sum_{k \in \Lambda_{B}} \norm{\gamma_{(k)}}_{{C^1(B_{\varepsilon_B}}(0))} \leq M_* \,.
 \label{M bound}
\end{align}
 This parameter  is universal. We will need this parameter later when estimating the size of the perturbations, see \eqref{vel amp L2 estimate} and \eqref{mag amp L2}. 
\end{remark}

\section{Constructing the Perturbation: Intermittent Shear flows}
\label{sec:intertmittent:section}
Let $\Phi : \mathbb{R} \to \mathbb{R}$ be a smooth cutoff function supported on the interval $[-1,1]$. Assume it is normalized in such a way that $\phi := - \frac{d^2}{dx^2}\Phi$ satisfies 
\begin{equation*}
\int_{\mathbb{R}} \phi^2(x)dx = 2\pi. 
\end{equation*}
For a small parameter $r$, define the rescaled functions  
\begin{equation*}
\phi_r(x) := \frac{1}{r^{\frac{1}{2}}}\phi\left(\frac{x}{r}\right), \qquad \mbox{and} \qquad 
\Phi_r(x):=  \frac{1}{r^{\frac{1}{2}}} \Phi\left(\frac{x}{r}\right), 
\end{equation*}
which implies the relation $\phi_r = -r^2 \frac{d^2}{dx^2} \Phi_r$.
We periodize $\phi_r$ and $\Phi_r$ so that we can view the resulting functions (which we will also denote as $\phi_r$ and $\Phi_r$) as functions defined on $\mathbb{R}/2\pi \mathbb{Z} =  \mathbb{T}$. For a large parameter $\lambda$  such that $\lambda^{-1} \ll r$ and $r \lambda \in \mathbb{N}$ the \textit{intermittent shear velocity flow} is defined as  
\begin{equation*}
W_{(k)} :=  \phi_r( \lambda r N_{\Lambda}k\cdot x)k_1  \qquad \text{ for } \qquad k \in \Lambda_u \cup \Lambda_B \, ,
\end{equation*}
and the \textit{intermittent shear magnetic flow} is defined as 
\begin{equation*}
D_{(k)} :=  \phi_r( \lambda r N_{\Lambda}k\cdot x)k_2 \qquad \text{ for } \qquad k \in \Lambda_B \, ,
\end{equation*}
where the notation $(k)$ at the subindex is shorthand for a dependence on $k, \lambda \text{ and }$other parameters. The fields 
$W_{(k)}$ and $D_{(k)}$ are $(\mathbb{T}/(r \lambda))^3-$ periodic, have zero mean, and are divergence free. We introduce the shorthand notation 
\begin{equation*}
\phi_{(k)}(x) := \phi_r( \lambda r N_{\Lambda}k\cdot x), \qquad \Phi_{(k)}(x) := \Phi_r( \lambda r N_{\Lambda}k\cdot x)
\end{equation*}
which allows us to write the intermittent fields more concisely as
\begin{equation*}
W_{(k)} = \phi_{(k)} k_1,  \qquad D_{(k)} = \phi_{(k)} k_2 \,.
\end{equation*}
Note that by the choice of normalization for $\phi$,   we have
\begin{equation}
\label{normalization}
\left \langle \phi_{(k)}^2\right\rangle = \fint_{\mathbb{T}^3} \phi_{(k)}^2(x) dx = 1 \, .
\end{equation}
This sets the zeroth Fourier coefficient for $\phi_{(k)}^2$ to equal $1$ and implies that $\norm{W_{(k)}}_{L^2}^2 = \norm{D_{(k)}}_{L^2}^2 = 8\pi^3$.

\subsection{Estimates for $W_{(k)}$ and $D_{(k)}$}

\begin{lemma}
\label{linear estimate lemma}
For $p \in [1,\infty]$ and $M \in \mathbb{N}$ we have the following estimates for $\phi_{(k)}$ and $\Phi_{(k)}$:
 \begin{equation}
\label{linear intermittent  estimates}
 \norm{\nabla^M \Phi_{(k)}}_{L^p}  + \norm{\nabla^M \phi_{(k)}}_{L^p} \lesssim  \lambda^M r^{\frac{1}{p} - \frac{1}{2}}.
\end{equation} 
Furthermore, we have the following estimate for the size of the support of $\phi_{(k)}$:
\begin{equation}
\label{est:support:phi}
\abs{\supp(\phi_{(k)})} \lesssim r 
\end{equation}
where $| \cdot |$ denotes Lebesgue measure and the implicit constant only depends on the wavevector sets and fixed geometric quantities. 
\end{lemma}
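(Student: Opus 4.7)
The strategy is to reduce every estimate to a one-dimensional computation involving the unrescaled profiles $\phi$ and $\Phi$, exploiting the one-dimensional structure of $\phi_{(k)}$ and $\Phi_{(k)}$ together with the periodization.

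First I would record the one-dimensional scaling. Since $\phi_r(y) = r^{-1/2} \phi(y/r)$ with $\phi$ compactly supported in $[-1,1]$, the change of variables $u = y/r$ gives
\begin{equation*}
\|\phi_r^{(M)}\|_{L^p(\T)}^p = \int_{-\pi}^{\pi} r^{-(M+1/2)p} |\phi^{(M)}(y/r)|^p \, dy = r^{1-p(M+1/2)} \|\phi^{(M)}\|_{L^p(\R)}^p,
\end{equation*}
provided $r \leq \pi$ so that one full period contains the support. Hence $\|\phi_r^{(M)}\|_{L^p(\T)} \lesssim r^{-M} r^{1/p - 1/2}$, and the identical estimate holds for $\Phi_r^{(M)}$.

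Next I would pull this back to $\T^3$. By the chain rule,
\begin{equation*}
\nabla^M \phi_{(k)}(x) = (\lambda r N_\Lambda)^M \, \phi_r^{(M)}(\lambda r N_\Lambda k \cdot x) \, k^{\otimes M},
\end{equation*}
and similarly for $\Phi_{(k)}$. The key observation is that the frequency vector $m := \lambda r N_\Lambda k$ lies in $\Z^3 \setminus \{0\}$: by assumption $r\lambda \in \N$, and the remark on $N_\Lambda$ guarantees $N_\Lambda k \in \Z^3$. Therefore, for any $2\pi$-periodic function $g$ on $\R$, Fubini along the direction $m/|m|$ combined with periodicity yields
\begin{equation*}
\int_{\T^3} g(m\cdot x) \, dx = (2\pi)^2 \int_{\T} g(y) \, dy.
\end{equation*}
Applying this to $g = |\phi_r^{(M)}|^p$ gives $\|\nabla^M \phi_{(k)}\|_{L^p(\T^3)} \lesssim (\lambda r)^M \|\phi_r^{(M)}\|_{L^p(\T)} \lesssim \lambda^M r^{1/p - 1/2}$, and the same argument handles $\Phi_{(k)}$.

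Finally, for the support estimate, note that on a single period $[-\pi,\pi]$ the function $\phi_r$ has support of Lebesgue measure $\leq 2r$. Pulling back under the measure-preserving map $x \mapsto \lambda r N_\Lambda k \cdot x \bmod 2\pi$ (which is well-defined because $m \in \Z^3\setminus\{0\}$) and using the same identity as above with $g = \mathbf{1}_{\supp \phi_r}$, we conclude $|\supp \phi_{(k)}| \lesssim r$. There is no real obstacle here; the only point to be careful about is ensuring the integrality of $m$ so that the change-of-variables/periodization identity is sharp, which is precisely why the parameters are chosen so that $r\lambda \in \N$ and $N_\Lambda k \in \Z^3$.
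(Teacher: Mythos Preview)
Your argument is correct. The key identity $\int_{\T^3} g(m\cdot x)\,dx = (2\pi)^2 \int_{\T} g(y)\,dy$ for $m\in\Z^3\setminus\{0\}$ and $2\pi$-periodic $g$ is valid (it follows immediately from the Fourier expansion of $g$), and the integrality of $m = \lambda r N_\Lambda k$ is guaranteed exactly as you say. Applying it to $g = |\phi_r^{(M)}|^p$ and to $g = \mathbf{1}_{\supp\phi_r}$ gives both the $L^p$ bound and the support bound in one stroke.

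The paper argues differently: it first computes the $L^\infty$ norm by direct differentiation, then bounds $|\supp\phi_{(k)}|$ by a hands-on geometric count---observing that on each fundamental cube of side $2\pi/(\lambda r)$ the support of $\phi_{(k)}$ consists of a bounded number of parallel slabs of thickness $\sim\lambda^{-1}$, and summing over the $(\lambda r)^3$ cubes. The $L^1$ bound then follows from H\"older and the support estimate, and general $L^p$ by interpolation. Your route is shorter and more elegant for this lemma. The paper's geometric picture, however, is not wasted effort: the very next result (the product estimate for $\phi_{(k)}\phi_{(k')}$ with $k\neq k'$) genuinely requires understanding the intersection of two families of slabs as a family of thickened lines, and your averaging identity does not immediately adapt to that two-directional situation without a comparable geometric input. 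So the paper's method is setting up the machinery for what comes next, whereas yours is optimized for the present statement.
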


\begin{proof}[Proof of Lemma~\ref{linear estimate lemma}]

First, we estimate the $L^{\infty}$ norm.  Let $\alpha$ be a multiindex such that $|\alpha | = M$. Then, 
\begin{align*}
\partial_x^{\alpha}\phi_{(k)}(x) = \partial_x^{\alpha}(\phi_r(N_{\Lambda} \lambda r k \cdot x   )    ) 
&= k^{\alpha} (N_{\Lambda} r \lambda )^{M}   \frac{d^{M}}{dx^M}\phi_r(N_{\Lambda} \lambda r k \cdot x   )
\end{align*}
where $k^{\alpha} = \prod_{i=1}^3 k_i^{\alpha_i}$. 
Using the definition of $\phi_r$ we have that 
\begin{equation*}
 \frac{d^{M}}{dx^M} \phi_r(N_{\Lambda} r \lambda k \cdot x) = \frac{1}{r^{\frac{1}{2} + M} }  \frac{d^{M}}{dx^M} \phi( N_{\Lambda} \lambda k \cdot x) \,.
\end{equation*}
Since $\phi$ is a smooth compactly supported function this implies that
\begin{equation}
\label{est:max:phi}
\norm{ \nabla^M\phi_{(k)}}_{L^{\infty}} \lesssim \lambda^M r^{-\frac{1}{2}} \,.
\end{equation}

Next, we estimate the $L^1$ norm. To do this, we first obtain a bound on the size of the support of $\phi_{(k)}$, as claimed in \eqref{est:support:phi}. Recall that $\phi_{(k)}$ is $(\mathbb{T}/( \lambda r))^3$-periodic. Therefore, $\phi_{(k)}$ on $\mathbb{T}^3$ can be thought of as being made of $(\lambda r)^3$ copies of $\phi_{(k)}$ defined on cubes of side length $\frac{2 \pi}{\lambda r}$.  Thus, it suffices to obtain an estimate on cubes with side length $\frac{2\pi}{\lambda r}$ and then multiply the resulting estimates by $(\lambda r)^3$. Due to the periodicity of $\phi_{(k)}$, in one of these cubes the support of $\phi_{(k)}$ consists of parallel planes with thickness $\sim \lambda^{-1}$. The minimum distance between the planes is bounded below by $s \frac{2 \pi}{\lambda r}$ where $s \in (0,1)$ depends only on the wavevector sets (specifically, $s$ is the minimum distance from the planes determined by $k\cdot x = 0 $ to a point in $(2 \pi \mathbb{Z})^3$; by the rationality of the entries of $k$ and since there are only a finite number of wavevectors this number is finite). Since the side length of the cubes is $\frac{2 \pi}{\lambda r}$, the the maximum number of thickened planes that could compose the support of $\phi_{(k)}$ is bounded by $2s^{-1}$. Therefore, over the small cube we have a support bound given by $ |\supp(\phi_{(k)})| \leq C_{\Lambda_u, \Lambda_B} (\lambda r)^{-2} \lambda^{-1} $ where $C_{\Lambda_u, \Lambda_B}$ is a constant depending on the wavevector sets and other geometric quantities. Multiplying this bound by $(\lambda r)^3$ gives the desired support estimate for whole torus.

The $L^1$ estimate follows from the support bound. Using H\"older's inequality, \eqref{est:max:phi}, and \eqref{est:support:phi} we have
\begin{equation*}
    \norm{\nabla^M \phi_{(k)} }_{L^1} \leq \abs{\supp(\nabla^M \phi_{(k)} )} \norm{\nabla^M \phi_{(k)}}_{L^{\infty}} \lesssim \abs{\supp(\phi_{(k)} )}  \lambda^M r^{-\frac{1}{2}} \lesssim \lambda^M r^{\frac{1}{2}} \,.
\end{equation*}
Interpolating between the $L^1$ and $L^{\infty}$ yields the desired estimate for all $p\in (1,\infty)$.  
Repeating the same analysis for $\Phi_{(k)}$ gives the desired conclusion. 
\end{proof}

\begin{lemma}[\bf Product estimate]
\label{product estimate lemma}
For $p \in [1, \infty]$, $M \in \mathbb{N}$, and $k \neq k'$ we have the following  estimate 
\begin{equation}
\label{prod_est_highfreq}
\norm{\nabla^M( \phi_{(k)} \phi_{(k')}) }_{L^p(\mathbb{T}^3)} \lesssim \lambda^M r^{\frac{2}{p}  -1}.
\end{equation}
Furthermore, we have the following estimate for the size of the support of $\phi_{(k)} \phi_{(k')}$: 
\begin{equation}
\label{product support}
\abs{\supp(\phi_{(k)}\phi_{(k')})}\lesssim r^2
\end{equation}
where the implicit constant only depends on the wavevector sets and fixed geometric quantities. 
\end{lemma}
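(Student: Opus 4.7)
My approach is to reduce everything to an $L^\infty$ bound and a support bound, then interpolate. The $L^\infty$ bound on $\nabla^M(\phi_{(k)}\phi_{(k')})$ follows from the Leibniz rule together with the single-function estimate \eqref{est:max:phi} from Lemma~\ref{linear estimate lemma}: indeed
\[
\norm{\nabla^M(\phi_{(k)}\phi_{(k')})}_{L^\infty} \;\lesssim\; \sum_{j=0}^M \binom{M}{j}\norm{\nabla^j \phi_{(k)}}_{L^\infty}\norm{\nabla^{M-j}\phi_{(k')}}_{L^\infty} \;\lesssim\; \lambda^M r^{-1},
\]
which is the $p=\infty$ case of \eqref{prod_est_highfreq}. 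The $L^1$ bound (i.e.\ the $p=1$ case) will then follow from H\"older,
\[
\norm{\nabla^M(\phi_{(k)}\phi_{(k')})}_{L^1} \leq |\supp(\phi_{(k)}\phi_{(k')})| \cdot \norm{\nabla^M(\phi_{(k)}\phi_{(k')})}_{L^\infty},
\]
once the support estimate \eqref{product support} is in hand. Interpolation between these two endpoints gives $\lambda^M r^{2/p-1}$ for all $p\in[1,\infty]$, as claimed.

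\textbf{The main work is the support estimate \eqref{product support}.} I will mimic the geometric argument from the proof of \eqref{est:support:phi}. Since $\phi_{(k)}\phi_{(k')}$ is periodic with respect to the small box of side length $\frac{2\pi}{\lambda r}$, it suffices to estimate its support on one such cube and multiply by $(\lambda r)^3$. Inside such a cube, $\supp \phi_{(k)}$ is a finite union (of cardinality bounded independently of $q$) of slabs of thickness $\sim \lambda^{-1}$ perpendicular to $k$, and likewise $\supp \phi_{(k')}$ is a union of slabs of thickness $\sim \lambda^{-1}$ perpendicular to $k'$. The hypothesis $k\ne k'$ with $k,k'\in S^2\cap\mathbb Q^3$ and $k\perp k_1$, $k'\perp k_1'$ implies that $k$ and $k'$ are linearly independent, so each pairwise intersection of such slabs is a thickened line (parallelepiped) of cross-section $\sim \lambda^{-1}\times \lambda^{-1}$ and length at most the diameter of the small cube, i.e.\ $\lesssim (\lambda r)^{-1}$. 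Hence one intersection has volume $\lesssim \lambda^{-2}(\lambda r)^{-1}$. Summing over the $O(1)$ slab-pairs and multiplying by $(\lambda r)^3$ boxes yields the total bound $\lesssim r^2$.

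\textbf{Anticipated obstacle.} The delicate point is controlling the constant in the thickened-line volume bound: one must show that the $L^\infty$-to-$L^1$ ratio for $\supp \phi_{(k)}\phi_{(k')}$ really improves by one factor of $r$ relative to the single-function case, uniformly in $q$. This uniformity comes down to the fact that $\Lambda_u\cup\Lambda_B\subset S^2\cap\mathbb Q^3$ is a \emph{finite} set of rational unit vectors, so there is a minimum angle between any two distinct $k,k'$, and also a minimum denominator controlling how the integer lattice intersects the planes $\{k\cdot x\in\mathbb Z\}$ and $\{k'\cdot x\in\mathbb Z\}$. Both bounds can be absorbed into the implicit constant (which is allowed to depend on $\Lambda_u,\Lambda_B$ and $N_\Lambda$). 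After this geometric step is settled, the interpolation and Leibniz-rule bookkeeping are routine.
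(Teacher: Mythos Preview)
Your proposal is correct and follows essentially the same argument as the paper: Leibniz rule for the $L^\infty$ endpoint, a geometric support estimate via periodicity reduction to a small cube of side $\frac{2\pi}{\lambda r}$ where the intersection of two non-parallel slab families is a bounded number of thickened lines of cross-section $\sim\lambda^{-2}$ and length $\lesssim(\lambda r)^{-1}$, then H\"older for $L^1$ and interpolation. Your anticipated obstacle---uniformity of the angle and slab-count constants via finiteness of $\Lambda_u\cup\Lambda_B$---is exactly what the paper invokes as well.
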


\begin{proof}[Proof of Lemma~\ref{product estimate lemma}]

Proceeding as before, we first estimate the $L^{\infty}$ norm. Using \eqref{linear intermittent  estimates}  with $p = \infty$ yields
\begin{align}
\label{est:prod:phi:max}
    \norm{\nabla^M (\phi_{(k)} \phi_{(k')}) }_{L^{\infty}} \lesssim \sum_{j = 0}^M \norm{ \nabla^j \phi_{(k)}}_{L^{\infty}} \norm{\nabla^{M-j}\phi_{(k')} }_{L^{\infty}}  \lesssim \lambda^M r^{-1} \,.
\end{align}
We now obtain a bound on the support of the function $\phi_{(k)}\phi_{(k')}$ for $k \neq k'$. As in the proof of Lemma \ref{linear estimate lemma} it suffices to obtain an estimate on cubes with side length $\frac{2\pi}{\lambda r}$ and then multiply the resulting estimates by $(\lambda r)^3$. Since the support of $\phi_{(k)}$ consists of parallel planes with thickness $\sim \lambda^{-1}$, the support of $\phi_{(k)} \phi_{(k')}$ will consist of the intersection of these thickened planes, which are thickened lines with cross-sectional area $\sim \frac{\lambda^{-2}}{\sin(\theta)}$ where $\theta$ is the angle between $k$ and $k'$. Since there are only a finite number of wavevectors, there is a minimal separation angle $\theta$. Therefore the cross-sectional area for an individual cylinder is bounded by $C_{\Lambda_u, \Lambda_B}\lambda^{-2}$ where $C_{\Lambda_u, \Lambda_B}$ is some constant depending on the wavevector sets and other geometric quantities. To estimate the total number of intersections of the planes in a given cube, we note that since the total number of thickened planes in the support of  $\phi_{(k)}$ in a small cube is bounded by $2s^{-1}$ the number of intersection points for two distinct planes is bounded by $4s^{-2}$. Finally, the length of such an intersection is bounded by the main diagonal of the cube, therefore it is bounded by $2 \lambda r$.  Combining all of this, we conclude that, over an individual cube with side length $ \frac{2 \pi}{\lambda r}$, the measure of the support of $\phi_{(k)} \phi_{(k')}$ is bounded by $C_{\Lambda_u, \Lambda_B}\lambda^{-2}(\lambda r )^{-1}$. Multiplying by the total number of cubes $(\lambda r)^{3}$ gives the bound $ C_{\Lambda_u, \Lambda_B} r^2$ in \eqref{product support}. 

We now proceed with the $L^1$ estimate using H\"older's inequality, \eqref{est:prod:phi:max}, and \eqref{product support}: 
\begin{align*}
    \norm{ \nabla^M (\phi_{(k)}\phi_{(k')} ) }_{L^1} \leq |\supp(\nabla^M (\phi_{(k)}\phi_{(k')} ))| \norm{\nabla^M (\phi_{(k)}\phi_{(k')} )}_{L^{\infty}} \lesssim |\supp(\phi_{(k)}\phi_{(k')} )| \lambda^M r^{-1}  \lesssim \lambda^M r \,.
\end{align*}
By interpolation between the $L^1$ and $L^{\infty}$ norms we obtain the desired result. 
\end{proof}

We will now fix the values of the parameters $r$ and $\lambda$. We set 
\[ 
\lambda := \lambda_{q+1} \qquad \mbox{and} \qquad r := \lambda_{q+1}^{-\frac{3}{4}}.
\] 
The requirement that $r\lambda \in \mathbb{N} = \lambda_{q+1}^{-\frac{3}{4}}$ implies that $b$ from \eqref{frequency and amplitude parameters} should be divisible by 4. 

\begin{remark}
Now that we have defined all the fundamental parameters, we can specify values that allow the proof of Proposition~\ref{prop:main:iteration} to close. If we let $\beta = 10^{-9}$ then  $b = 10^4$ and $\eta = 10^{-3}$ are allowable choices.   
\end{remark}

\subsection{The Perturbation}
\subsubsection{Amplitudes}
To apply the geometric lemmas we need pointwise control over the size of the stresses. However, the stresses are not necessarily spatially homogeneous, so we need to divide them by suitable functions to ensure that they are pointwise small, as well as small in $L^1$.  To achieve this, we follow \cite{LuoTiti18}.  Let $\chi: [0, \infty) \to \mathbb{R}$ be a smooth function satisfying 
\begin{equation*}
\chi (z) = 
 \begin{cases} 
      1 & 0 \leq z\leq 1 \\
      z & z \geq 2  
   \end{cases}
\end{equation*}
with $z \leq 2\chi(z) \leq 4z $ for $z \in (1,2)$. 

Next, we define
\begin{equation*}
\rho_B(x,t) := 2\delta_{q+ 1} \varepsilon_B^{-1} c_{B}\chi\left( (c_B\delta_{q+1})^{-1} |\mathring{R}_{\ell}^B(x,t) | \right) \,
\end{equation*}
where $\eps_{B}$ is as in Lemma \ref{geometric lem 1}. The key properties of $\rho_B$ are that pointwise we have
\begin{equation*}
\left|  \frac{\mathring{R}_{\ell}^B(x,t)}{\rho_B(x,t)} \right| = \left| \frac{\mathring{R}_{\ell}^B(x,t)}{2\delta_{q+1} \varepsilon_B^{-1} c_{B}\chi\left( (c_B\delta_{q+1})^{-1} |\mathring{R}_{\ell}^B(x,t) | \right)} \right| \leq \varepsilon_B
\end{equation*}
and that for all $p \in [1, \infty)$ the bound
\begin{equation}
\label{mag divisor lp estimate}
\norm{ \rho_B }_{L^p} \leq 8  \varepsilon_B^{-1}\left( (c_B (8\pi^3)^{\frac{1}{p}})\delta_{q+1} + \norm{\mathring{R}_{\ell}^{B} }_{L^p} \right) 
\end{equation}
holds. By using standard H\"older estimates (see, for example, \cite[Appendix C]{BDLISZ15}), \eqref{est:mollified:stress}, the ordering $\ell \leq \delta_{q+1}$, and the gain of integrability for mollified functions we have
\begin{subequations}
\begin{align}
\label{Derivative estimates mag divisor}
\norm{ \rho_B }_{C_{x,t}^0} \lesssim  \ell^{-3} \qquad  \mbox{ and } \qquad \norm{ \rho_B }_{C_{x,t}^j}  \lesssim \ell^{-4j}\\
\norm{ \rho_B^{\frac{1}{2}} }_{C_{x,t}^0 } \lesssim \ell^{-2} \qquad  \mbox{ and } \qquad \norm{  \rho_B^{\frac{1}{2}} }_{C_{x,t}^j} \lesssim \ell^{-5j}  \, .
\label{Derivative estimates mag sqrt divisor}
\end{align}
\end{subequations}
for $j\geq 1$.

We then define the {\em magnetic amplitude functions} 
\begin{align}
\label{def:mag:amp}
a_{(k)}:= a_{k, B}(x,t) =  \rho_B^{\frac{1}{2} }  \gamma_{(k)} \left(\frac{-\mathring{R}_{\ell}^B}{\rho_B}   \right), \qquad \mbox{ for } \qquad k \in \Lambda_B \,.
\end{align}
By \eqref{mag divisor lp estimate},  \eqref{mag inductive assumptions}, the fact that mollifiers have mass $1$,  and by choosing $c_B$ sufficiently small, we have 
\begin{align}
\label{mag amp L2}
\norm{a_{k,B} }_{L^2} &\leq \norm{ \rho_B }_{L^1}^{\frac{1}{2}} \norm{ \gamma_{(k)} }_{C^0(B_{\varepsilon_B}(0))} \notag\\
                  &\leq M_* (8 \varepsilon_B^{-1})^{\frac{1}{2}}( c_B 8\pi^3\delta_{q+1} + \norm{\mathring{R}_{\ell}^{B} }_{L^1})^{\frac{1}{2}} \notag \\
                  &\leq M_* [  8 \varepsilon_B^{-1}c_B \delta_{q+1} (8\pi^3 + 1)    ]^{\frac{1}{2}} \notag \\
                  &\leq  \min\left[ \left(\frac{c_u}{|\Lambda_B|}\right) ^{\frac{1}{2}}, \frac{1}{3|\Lambda_B| C_* (8 \pi^3)^{\frac{1}{2}}} \right] \delta_{q+1}^{\frac{1}{2}}\, .
\end{align}
where $C_*$ is defined in Lemma \ref{Decorrelation}.
The reason for the strange prefactor in front of the $\delta_{q+1}^{\frac{1}{2}}$ is because the magnetic amplitudes will be used to define two different objects which need to satisfy different sets of bounds (for details, see the discussion preceding \eqref{G estimates} and \eqref{Lp decorr mag} below).
Using~\eqref{Derivative estimates mag sqrt divisor} we arrive at 
\begin{equation}
\label{mag amp estimates}
 \norm{ a_{(k)} }_{C_{x,t}^j} \lesssim \ell^{-5j -2 } \,.
\end{equation}
for $j\geq 0$.

The motivation for definition \eqref{def:mag:amp} is as follows: by \eqref{normalization} we have 
\begin{align*}
\phi_{(k)}^2 (k_1 \otimes k_2 - k_2 \otimes k_1) &= \langle \phi_{(k)}^2 \rangle (k_1 \otimes k_2 - k_2 \otimes k_1) + \mathbb{P}_{\neq 0}(\phi_{(k)}^2)(k_1 \otimes k_2 - k_2 \otimes k_1)   \\
                                   &= k_1 \otimes k_2 - k_2 \otimes k_1 + \mathbb{P}_{\neq 0}(\phi_{(k)}^2)(k_1 \otimes k_2 - k_2 \otimes k_1) 
\end{align*}
where $\langle \cdot \rangle$ denotes spatial average over $\mathbb{T}^3$ and $\mathbb{P}_{\neq 0}$ denotes projection onto nonzero Fourier modes.
Multiplying through by $a_{(k)}^2$, summing over $\Lambda_B$, and using Geometric Lemma 1 gives
\begin{equation}
\label{eq:mag:cancel:eqn}
 \sum_{ k \in  \Lambda_B} a_{(k)}^2 \phi_{(k)}^2(k_1 \otimes k_2 - k_2 \otimes k_1)  
= -\mathring{R}_{\ell}^B + \sum_{ k \in \Lambda_B} a_{(k)}^2 \mathbb{P}_{\neq 0}(\phi_{(k)}^2)(k_1 \otimes k_2 - k_2 \otimes k_1)  \, .
\end{equation}

Before we give the definition of the velocity amplitude functions we note that we need to account for two key differences with the magnetic amplitudes: Geometric Lemma 2 allows us to cancel matrices in a neighborhood of the identity as opposed to the origin. In order to cancel both stresses, the velocity perturbation will need to have wavevectors from both $\Lambda_u$ and $\Lambda_B$  (see \eqref{principal velo ptb}). To address this second issue we define  
\begin{equation}
\label{def:G}
\mathring{G}^B: = \sum_{k \in \Lambda_B}a_{(k)}^2 (k_1 \otimes k_1 - k_2 \otimes k_2).
\end{equation}
Note that since $\mathring{G}^B$ only depends on $a_{(k)}$, we have that $\mathring{G}^B$ is a function of $\mathring{R}_{\ell}^B$.
By using that $a_{(k)}^2 = \rho_B \gamma_{(k)}^2(-\frac{R_{\ell}^B}{\rho_B})$, \eqref{est:mollified:stress},   \eqref{Derivative estimates mag divisor},  and  \eqref{mag amp L2}, for $j\geq 0$  we have
\begin{align}
\label{G estimates}
\norm{\mathring{G}^B}_{C_{x,t}^0} \lesssim \ell^{-3} , \qquad \mbox{ and } \qquad \norm{ \mathring{G}^B}_{L^1} &\leq 2c_u\delta_{q+1} \, .
\end{align}

Next, define $\rho_u$  and the associated {\em velocity amplitudes} as
\begin{align}
\rho_u &:= 2 \varepsilon_u^{-1}c_u\delta_{q+ 1}\chi\left( (c_u\delta_{q+1})^{-1} |\mathring{R}_{\ell}^u(x,t) + \mathring{G}^B| \right)
\, , \notag \\
a_{(k)} := a_{k, u}(x,t) &= \rho_u^{\frac{1}{2}}\gamma_{(k)} \left(\Id - \frac{\mathring{R}_{\ell}^u + \mathring{G}^B}{\rho_u} \right) \, ,
\qquad \mbox{ for } \qquad k \in \Lambda_u \,.
\label{def:vel:amp}
\end{align}
Comparing \eqref{def:mag:amp} and \eqref{def:vel:amp} we notice that the definitions of $a_{(k)}$ for $k\in \Lambda_B$, respectively for $k \in \Lambda_u$, differ slightly. Throughout the paper we abuse this notation and write $a_{(k)} = a_{k,B}$ for $k\in\Lambda_B$ and also $a_{(k)}=a_{k,u}$ for $k \in \Lambda_u$.
With these definitions we have the following  properties for $\rho_u$ and $a_{(k)}$:
\begin{equation*}
\left|  \frac{\mathring{R}_{\ell}^u(x,t) + \mathring{G}^B}{\rho_u(x,t)} \right| = \left| \frac{\mathring{R}_{\ell}^u(x,t) + \mathring{G}^B}{2\delta_{q+ 1} \varepsilon_u^{-1} c_{u}\chi\left( (c_u\delta_{q+1})^{-1} |\mathring{R}_{\ell}^u(x,t) + \mathring{G}^B| \right)} \right| \leq \varepsilon_u
\end{equation*}
and we have for all $p \in [1, \infty)$
\begin{equation}
\label{vel divisor lp estimate}
\norm{ \rho_u }_{L^p} \leq 8  \varepsilon_u^{-1}\left( (c_u (8\pi^3)^{\frac{1}{p}})\delta_{q+1} + \norm{\mathring{R}_{\ell}^u(x,t) + \mathring{G}^B}_{L^p} \right) \,.
\end{equation}
Using \eqref{mag amp L2}, \eqref{vel divisor lp estimate}, the fact that mollifiers have mass $1$, \eqref{vel inductive assumptions}, and   choosing $c_u$ sufficiently small we have 
\begin{align}
\label{vel amp L2 estimate}
\norm{a_{(k)}}_{L^2}  \leq \norm{\rho_u}_{L^1}^{\frac{1}{2}}\norm{ \gamma_{(k)} }_{C^0(B_{\varepsilon_u}(\Id))}  & \leq M_*(8 \varepsilon_u^{-1}(c_u 8\pi^3\delta_{q+1} + \norm{\mathring{R}_{\ell}^u }_{L^1}  +  \norm{ \mathring{G}^B}_{L^1})      )^{\frac{1}{2}} \notag \\
&\leq M_* (8 \varepsilon_u^{-1}(c_u 8\pi^3\delta_{q+1} + c_u \delta_{q+1}  + 2c_u \delta_{q+1} )      )^{\frac{1}{2}} \notag \\
&\leq \delta_{q+1}^{\frac{1}{2}}c_u^{\frac{1}{2}}M (8 \varepsilon_u^{-1}( 8\pi^3 + 3) )^{\frac{1}{2}} \notag \\
&\leq \frac{\delta_{q+1}^{\frac{1}{2}}}{ 3 |\Lambda_u| C_* (8 \pi^3)^{\frac{1}{2}}  } \,.
\end{align}
Note that $c_u$ only depends on $M_*$ and $\Lambda_B$ which are fixed at the beginning of the induction. In particular, $c_u$ does not depend on the value of $c_B$ so there is no circular reasoning caused by  $c_B$ depending on $c_u$.
 Using the same techniques used to derive \eqref{mag amp estimates} with \eqref{G estimates} we have for $j \geq 0$
\begin{equation}
\label{vel amp estimates}
 \norm{ a_{(k)} }_{C_{x,t}^j} \lesssim \ell^{-10j -2} \qquad \mbox{for} \qquad k \in \Lambda_u \, .
\end{equation}

Analogous reasoning to that used in \eqref{eq:mag:cancel:eqn} for the coefficients defined for  $k \in \Lambda_u$ gives
\begin{align}
\sum_{k \in \Lambda_u } a_{(k)}^2 \phi_{(k)}^2k_1 \otimes k_1 =  \rho_u \Id - \mathring{R}_{\ell}^u - \mathring{G}^B+ \sum_{k \in \Lambda_u} a_{(k)}^2 \mathbb{P}_{\neq 0}(\phi_{(k)}^2)k_1 \otimes k_1 \,.
\label{eq:vel:cancel:eqn}
\end{align}
Thus, if we  define the the \textit{principal part of the perturbations} $w_{q+1}^p$ and $d_{q+1}^p$  as  
\begin{subequations}
\label{principal ptb}
\begin{align}
w_{q+1}^p &:= \sum_{k \in \Lambda_u } a_{(k)} W_{(k)} + \sum_{k \in \Lambda_B} a_{(k)}W_{(k)}
\label{principal velo ptb}\\
d_{q+1}^p &:= \sum_{k \in \Lambda_B} a_{(k)} D_{(k)} \, ,
\label{principal mag ptb}
\end{align}
\end{subequations}
then in the nonlinear term in the magnetic  equation we can use \eqref{eq:mag:cancel:eqn} to write
\begin{align}
&w_{q+ 1}^p \otimes  d_{q+ 1}^p - d_{q+ 1}^p \otimes w_{q+1}^p + \mathring{R}_{\ell}^B \notag \\
&= \sum_{k \in \Lambda_B} a_{(k)}^2 \phi_{(k)}^2 (k_1 \otimes k_2 - k_2 \otimes k_1) + \mathring{R}_{\ell}^B  + 
 \sum_{k \neq k' \in \Lambda_B} a_{(k)}a_{(k')}\phi_{(k)}\phi_{(k')}(k_1\otimes k_2' - k_2'\otimes k_1) \notag \\
 &\quad + \sum_{k \in \Lambda_u, k' \in \Lambda_B} a_{(k)}a_{(k')}\phi_{(k)}\phi_{(k')}(k_1\otimes k_2' - k_2'\otimes k_1) \notag \\
 &= \sum_{k \in \Lambda_B} a_{(k)}^2 \mathbb{P}_{\neq 0}(\phi_{(k)}^2)(k_1 \otimes k_2 - k_2 \otimes k_1) + \sum_{ k \neq k' \in \Lambda_B} a_{(k)}a_{(k')}\phi_{(k)}\phi_{(k')}(k_1\otimes k_2' - k_2'\otimes k_1) \notag \\
 &\quad + \sum_{k \in \Lambda_u, k' \in \Lambda_B} a_{(k)}a_{(k')}\phi_{(k)}\phi_{(k')}(k_1\otimes k_2' - k_2'\otimes k_1) \, ,
 \label{mag oscillation cancellation calculation}
\end{align}
while for the velocity equation we have that
\begin{align}
&w_{q+1}^p \otimes w_{q+ 1}^p - d_{q+1}^p \otimes d_{q+1}^p + \mathring{R}_{\ell}^u \notag \\
&= \sum_{k, k' \in \Lambda_u} a_{(k)}a_{(k')}\phi_{(k)}\phi_{(k')}k_1\otimes k_1' 
+ \sum_{k, k' \in \Lambda_B} a_{(k)}a_{(k')}\phi_{(k)}\phi_{(k')}( k_1\otimes k_1'-k_2\otimes k_2') + \mathring{R}_{\ell}^u 
\notag \\
 &\quad + \sum_{ k \in \Lambda_u, k' \in \Lambda_B}  a_{(k)}a_{(k')}\phi_{(k)}\phi_{(k')}(k_1\otimes k_1'  + k_1'\otimes k_1)    \notag\\
&=: \OO_1 + \OO_2
 \label{eq:osc:cancellation:1}
\end{align}
where the terms $\OO_1$ and $\OO_2$ are defined by the first, respectively second line of the above.
Using the identity
\[
\sum_{k\in \Lambda_B} a_{(k)}^2 \phi_{(k)}^2 (k_1 \otimes k_1 - k_2 \otimes k_2 ) =  \mathring{G}^B+ \sum_{k\in \Lambda_B} a_{(k)}^2 \mathbb{P}_{\neq 0}(\phi_{(k)}^2) (k_1 \otimes k_1 - k_2 \otimes k_2 ) \, ,
\]
which follows from \eqref{def:G}, and appealing to \eqref{eq:vel:cancel:eqn}, we rewrite the $\OO_1$ term as
\begin{align}
\OO_1 &= \sum_{ k \in \Lambda_u} a_{(k)}^2 \phi_{(k)}^2 k_1 \otimes k_1 
+ \sum_{ k \in \Lambda_B} a_{(k)}^2 \phi_{(k)}^2 (k_1 \otimes k_1- k_2 \otimes k_2 )
+ \mathring{R}_{\ell}^u  \notag\\
 &\quad + \sum_{k \neq k' \in \Lambda_u} a_{(k)}a_{(k')}\phi_{(k)}\phi_{(k')}k_1\otimes k_1' 
 + \sum_{ k \neq  k' \in \Lambda_B} a_{(k)}a_{(k')}\phi_{(k)}\phi_{(k')} ( k_1\otimes k_1'- k_2\otimes k_2')
  \notag \\
 &=  \rho_u \Id - \mathring{R}_{\ell}^u - \mathring{G}^B  + \mathring{R}_{\ell}^u  + \mathring{G}^B + \sum_{ k \in \Lambda_u} a_{(k)}^2 \mathbb{P}_{\neq 0}(\phi_{(k)}^2)k_1 \otimes k_1   + \sum_{k \in \Lambda_B}a_{(k)}^2 \mathbb{P}_{\neq 0}(\phi_{(k)}^2)(k_1 \otimes k_1 - k_2 \otimes k_2) \notag \\
 &\quad + \sum_{k \neq k' \in \Lambda_u} a_{(k)}a_{(k')}\phi_{(k)}\phi_{(k')}k_1\otimes k_1' 
 + \sum_{ k \neq  k' \in \Lambda_B} a_{(k)}a_{(k')}\phi_{(k)}\phi_{(k')} ( k_1\otimes k_1'- k_2\otimes k_2')
  \notag \\
 &= \rho_u \Id +  \sum_{ k \in \Lambda_u} a_{(k)}^2 \mathbb{P}_{\neq 0}(\phi_{(k)}^2)k_1 \otimes k_1  + \sum_{k \in \Lambda_B}a_{(k)}^2 \mathbb{P}_{\neq 0}(\phi_{(k)}^2)(k_1 \otimes k_1 - k_2 \otimes k_2) \notag\\
&\quad + \sum_{k \neq k' \in \Lambda_u} a_{(k)}a_{(k')}\phi_{(k)}\phi_{(k')}k_1\otimes k_1' 
 + \sum_{ k \neq  k' \in \Lambda_B} a_{(k)}a_{(k')}\phi_{(k)}\phi_{(k')} ( k_1\otimes k_1'- k_2\otimes k_2')
  \,.
 \label{eq:osc:cancellation:2}
\end{align}
Therefore, combining \eqref{eq:osc:cancellation:1} and \eqref{eq:osc:cancellation:2}, we arrive at 
\begin{align}
&w_{q+1}^p \otimes w_{q+ 1}^p - d_{q+1}^p \otimes d_{q+1}^p + \mathring{R}_{\ell}^u \notag \\
&= \rho_u \Id +  \sum_{ k \in \Lambda_u} a_{(k)}^2 \mathbb{P}_{\neq 0}(\phi_{(k)}^2)k_1 \otimes k_1  + \sum_{k \in \Lambda_B}a_{(k)}^2 \mathbb{P}_{\neq 0}(\phi_{(k)}^2)(k_1 \otimes k_1 - k_2 \otimes k_2) \notag\\
 &\quad + \sum_{ k \neq k' \in \Lambda_u} a_{(k)}a_{(k')}\phi_{(k)}\phi_{(k')}k_1\otimes k_1' 
 + \sum_{ k \neq k' \in \Lambda_B} a_{(k)}a_{(k')}\phi_{(k)}\phi_{(k')} (k_1\otimes k_1' - k_2\otimes k_2') \notag\\
 &\quad + \sum_{k \in \Lambda_u, k' \in \Lambda_B}  a_{(k)}a_{(k')}\phi_{(k)}\phi_{(k')}(k_1\otimes k_1'  + k_1'\otimes k_1)    \,.
 \label{eq:osc:cancellation}
\end{align}
The calculation in \eqref{eq:osc:cancellation} motivates the definition of $\mathring{G}^B$: due to the fact that $w_{q+1}^p$ needs more wavevectors than $d_{q+1}^p$, we get an extra self-interaction term in the expansion of $w_{q+1}^p\otimes w_{q+1}^p$ that is too large to go into the next Reynolds stress so must be cancelled completely.  

Note that as a consequence of the definitions~\eqref{principal ptb}, the estimates \eqref{linear intermittent  estimates},  \eqref{mag amp estimates}, \eqref{vel amp estimates}, and the parameter inequality $\ell^{-10} \ll \lambda_{q+1}$ we have
\begin{align}
\label{principal c1 est}
\norm{ w_{q+1}^p }_{C_{x,t}^1 }  +   \norm{ d_{q+1}^p }_{C_{x,t}^{1}} \lesssim \ell^{-2} \lambda_{q+1} r^{-\frac{1}{2}}.
\end{align}

\subsection{Incompressibility Correctors}
Due to the spatial dependence of the amplitudes $a_{(k)}$,  the principal parts of the perturbation, $w_{q+1}^p$ and $d_{q+1}^p$, are no longer divergence free. To fix this, we define \textit{incompressibility correctors} analogously to \cite{BCV18}. First define 
\begin{equation}
\label{corrector vector}
 \quad W_k^c := \frac{1}{N_{\Lambda}^2 \lambda_{q+1}^2} \Phi_{(k)} k_1 \, ,  \qquad D_k^c:= \frac{1}{N_{\Lambda}^2 \lambda_{q+1}^2} \Phi_{(k)} k_2 \, .
\end{equation}
Then we define the incompressibility correctors  
\begin{align*}
w_{q+1}^c &:=  \sum_{k\in \Lambda_u  }\curl (\nabla a_{(k)} \times W_k^c) + \nabla a_{(k)} \times \curl W_k^c +  \sum_{k\in \Lambda_B  }\curl (\nabla a_{(k)} \times W_k^c) + \nabla a_{(k)} \times \curl W_k^c\\
d_{q+1}^c &:=  \sum_{k\in \Lambda_B }\curl (\nabla a_{(k)} \times D_k^c) + \nabla a_{(k)} \times \curl D_k^c \,. \end{align*}

With this definition we see that
\begin{align}
\label{div free velocity}
\curl  \curl \left( \sum_{k \in \Lambda_u } a_{(k)} W_k^c  + \sum_{k \in \Lambda_B } a_{(k)} W_k^c   \right)  
&= \sum_{k \in \Lambda_u } a_{(k)}W_k  + \curl (\nabla a_{(k)} \times W_k^c) + \nabla a_{(k)} \times\curl W_k^c \notag\\
&+ \sum_{k \in \Lambda_B } a_{(k)}W_k  + \curl (\nabla a_{(k)} \times W_k^c) + \nabla a_{(k)} \times\curl W_k^c \notag \\
&= w_{q+1}^p + w_{q+1}^c 
\end{align}
and
\begin{align}
\label{div free magnetic}
\curl  \curl \left( \sum_{k \in \Lambda_B} a_{(k)} D_k^c \right)  &= \sum_{k \in \Lambda_B } a_{(k)}D_k  + \curl (\nabla a_{(k)} \times D_k^c) + \nabla a_{(k)} \times \curl D_k^c = d_{q+1}^p + d_{q+1}^c \,.
\end{align}
From \eqref{div free velocity} and \eqref{div free magnetic} we deduce that $\div (w_{q+1}^p + w_{q +1}^c) = \div (d_{q+1}^p + d_{q +1}^c) =  0$, which justifies the definitions of the incompressibility correctors.

Using  \eqref{corrector vector}, \eqref{mag amp estimates}, and \eqref{linear intermittent  estimates}, and the fact that $\ell^{-5} \ll \lambda_{q+1} $ we obtain for any $p\in [1, \infty]$ 
\begin{align}
\label{lp mag corrector estimate}
\norm{ d_{q+1}^c }_{L^p} &\leq  \sum_{ k \in \Lambda_B} \norm{ \curl (\nabla a_{(k)} \times D_k^c) + \nabla a_{(k)} \times \curl   D_k^c  }_{L^p} \notag \\
 &\leq \sum_{k \in \Lambda_B }  \norm{D_k^c \, \nabla^2 a_{(k)} }_{L^p} + \norm{ \nabla a_{(k)} \cdot \nabla D_k^c }_{L^p} + \norm{ \nabla a_{(k)} \times \curl  D_k^c }_{L^p} \notag \\
 &\lesssim \norm{ a_{(k)}}_{C_{x,t}^1} \norm{ D_k^c}_{W^{1,p}} + \norm{ a_{(k)} }_{C_{x,t}^2} \norm{ D_k^c }_{L^p} \lesssim \ell^{-7}r^{\frac{1}{p} - \frac{1}{2}} \lambda_{q+1}^{-1} + \lambda_{q+1}^{-2}r^{\frac{1}{p} -\frac{1}{2}}
  \ell^{-12} \notag \\
 &\lesssim  \ell^{-7}r^{\frac{1}{p} - \frac{1}{2}} \lambda_{q+1}^{-1} \,.
\end{align}
Using \eqref{vel amp estimates}  for $k \in \Lambda_u $ we also have that 
\begin{equation}
\label{lp vel corrector estimate}
\norm{ w_{q+1}^c }_{L^p} \lesssim \ell^{-12}r^{\frac{1}{p} - \frac{1}{2}}\lambda_{q+1}^{-1}\,.
\end{equation}
Thus, by \eqref{vel amp estimates}, \eqref{mag amp estimates},  \eqref{linear intermittent  estimates}, and $\ell^{-10} \ll \lambda_{q+1} $ we obtain 
\begin{equation}
\label{corrector c1 est}
\norm{w_{q+1}^c}_{C_{x,t}^1} \lesssim \ell^{-12}r^{-\frac{1}{2}}\,  \qquad \mbox{and} \qquad \norm{d_{q+1}^c}_{C_{x,t}^1} \lesssim \ell^{-7}r^{-\frac{1}{2}} \,.
\end{equation}

Lastly, we define the  velocity and magnetic perturbations:
\begin{subequations}
\label{perturbation}
\begin{align}
 w_{q+1} &:= w_{q+1}^p + w_{q+1}^c
\label{velocity perturbation}\\
 d_{q+1} &:= d_{q+1}^p + d_{q+1}^c
\label{magnetic perturbation}
\end{align}
\end{subequations}
and the next iterate:
\begin{subequations}
\label{q+1 iterate}
\begin{align}
 v_{q+1} &:= v_{\ell} + w_{q+1}
\label{q+1 velocity}\\
 B_{q+1} &:= B_{\ell}+ d_{q+1} \,.
\label{q+1 magnetic}
\end{align}
\end{subequations}

\subsection{$L^p$ Decorrelation}
In order to verify the inductive estimates on the perturbations $w_{q+1}$ and $d_{q+1}$ we will need the  $L^p$ Decorrelation Lemma from~\cite{BV}, which we record here for convenience. 
\begin{lemma}[$L^p$ Decorrelation]
\label{Decorrelation}
Fix integers N, $\kappa \geq 1$ and let $\zeta > 1$ be such that 
\begin{equation*}
\frac{2 \pi \sqrt{3} \zeta}{\kappa} \leq \frac{1}{3}  \text{ and } \zeta^4 \frac{(2\pi \sqrt{3}\zeta)^N }{\kappa^N} \leq 1
\end{equation*}
Let $p \in \{ 1,2\} $, and let $f$ be a $\mathbb{T}^3$-periodic function such that there exists a constant $C_f > 0$ such that 
$$
\norm{D^j f }_{L^p} \leq C_f \zeta^j 
$$
holds for all $0 \leq j \leq N + 4$.  In addition, let $g$ be a $(\mathbb{T}/\kappa)^3-periodic$ function. Then we have that 
$$
\norm{fg }_{L^p} \leq C_f C_* \norm{ g}_{L^p},
$$
where $C_*$ is a universal constant. 
\end{lemma}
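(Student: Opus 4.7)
The plan is to prove the lemma by partitioning $\mathbb{T}^3$ into small cubes on which $g$ completes exactly one period, then Taylor-expanding $f$ on each such cube and summing. Partition $\mathbb{T}^3$ into $\kappa^3$ closed cubes $\{Q_\alpha\}$ of side length $2\pi/\kappa$ with centers $x_\alpha$. By the $(\mathbb{T}/\kappa)^3$-periodicity of $g$,
$$\int_{Q_\alpha} |g|^p\,dx \,=\, \kappa^{-3}\,\|g\|_{L^p(\mathbb{T}^3)}^p.$$
Each $Q_\alpha$ has diameter $2\pi\sqrt{3}/\kappa$, so the first hypothesis $2\pi\sqrt{3}\zeta/\kappa \leq 1/3$ is precisely the condition that makes a $\zeta$-scaled Taylor series of $f$ converge absolutely on each $Q_\alpha$.

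On each cube I would write $f = T_\alpha + R_\alpha$, where $T_\alpha(x) = \sum_{|\beta|<N} (\beta!)^{-1}\, D^\beta f(x_\alpha)\,(x-x_\alpha)^\beta$ is the Taylor polynomial of degree $N-1$ at $x_\alpha$ and $R_\alpha$ is the remainder. For the polynomial part, bound $|(x-x_\alpha)^\beta| \leq (2\pi\sqrt{3}/\kappa)^{|\beta|}$ pointwise on $Q_\alpha$ and pull the constants $D^\beta f(x_\alpha)$ outside of $\int_{Q_\alpha}|g|^p\,dx$, giving
$$\|T_\alpha\, g\|_{L^p(Q_\alpha)} \,\leq\, \Bigl(\sum_{|\beta|<N} (\beta!)^{-1}(2\pi\sqrt{3}/\kappa)^{|\beta|}|D^\beta f(x_\alpha)|\Bigr)\,\kappa^{-3/p}\|g\|_{L^p}.$$
Summing over $\alpha$, recognizing $\kappa^{-3}\sum_\alpha |D^\beta f(x_\alpha)|^p$ as a Riemann sum for $\|D^\beta f\|_{L^p}^p$, applying the hypothesis $\|D^\beta f\|_{L^p}\leq C_f\zeta^{|\beta|}$, and summing the resulting exponential-type series in $(2\pi\sqrt{3}\zeta/\kappa)^{|\beta|}$ then produces a bound of the form $\lesssim C_f\|g\|_{L^p}$, with the geometric factor $1/3$ from the first smallness assumption making the series finite.

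For the remainder, Taylor's theorem gives $|R_\alpha(x)|\leq (N!)^{-1}(2\pi\sqrt{3}/\kappa)^N\sup_{Q_\alpha}|D^N f|$. This is exactly where the extra four derivatives in the hypothesis enter: by Sobolev embedding $W^{4,p}(\mathbb{T}^3)\hookrightarrow L^\infty(\mathbb{T}^3)$, valid for $p\in\{1,2\}$ in three dimensions, one has $\|D^N f\|_{L^\infty}\lesssim \sum_{j=0}^4\|D^{N+j}f\|_{L^p}\lesssim C_f\zeta^{N+4}$. Combined with the second smallness assumption $\zeta^4(2\pi\sqrt{3}\zeta/\kappa)^N\leq 1$, the total remainder contribution is bounded by $C_f\|g\|_{L^p}$ and can be absorbed into the universal constant $C_*$.

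The main obstacle is the Riemann-sum step, particularly for $p=1$: the pointwise evaluations $D^\beta f(x_\alpha)$ are delicate to use for $L^1$-only data, and one must explicitly control the discrepancy between $\kappa^{-3}\sum_\alpha |D^\beta f(x_\alpha)|^p$ and $\|D^\beta f\|_{L^p}^p$. The clean fix is to replace pointwise values by cube averages $\fint_{Q_\alpha} D^\beta f$, use Jensen's inequality to pass upward to $\|D^\beta f\|_{L^p(Q_\alpha)}^p$, and absorb the Poincaré-type error into the higher-derivative estimates supplied by the same $(N+4)$-derivative hypothesis. Once this bookkeeping is carried through, combining the polynomial and remainder contributions yields $\|fg\|_{L^p}\leq C_f C_*\|g\|_{L^p}$ with a universal $C_*$ depending only on the Sobolev embedding and on the partition geometry.
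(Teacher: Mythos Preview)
The paper does not prove this lemma; it records the statement from \cite{BV} and uses it as a black box. Your outline is essentially the argument given in \cite{BV}: partition $\mathbb{T}^3$ into the $\kappa^3$ period cells of $g$, Taylor-expand $f$ on each cell to order $N$, use the periodicity of $g$ to factor $\kappa^{-3/p}\|g\|_{L^p}$ out of each cell integral, and control the order-$N$ remainder via the Sobolev embedding $W^{4,p}(\mathbb{T}^3)\hookrightarrow L^\infty$ (valid in three dimensions for $p\geq 1$), which is precisely why the hypothesis demands $N+4$ derivatives and why the second smallness condition carries the prefactor $\zeta^4$. Your diagnosis of the Riemann-sum subtlety for the polynomial part, and its resolution via cube averages together with Jensen's inequality, is also correct and is the standard way this step is handled.
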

We will apply this lemma with $f= a_{(k)}$, $g = \phi_{(k)}$, $\kappa = r\lambda_{q+1}$, $N = 1$ and $p = 2$. The choice of $C_f$ and $\zeta$  depends on the wavevector set.
For $k \in \Lambda_B$, using \eqref{mag amp L2}, \eqref{mag amp estimates}, and that $\ell \leq \delta_{q+1}$ we have for $j \geq 0$
\begin{equation*}
\norm{D^ja_{(k)} }_{L^2} \leq \frac{\delta_{q+1}^{\frac{1}{2}}}{3 C_* (8 \pi^3)^{\frac{1}{2}} |\Lambda_B|} \ell^{-8j}\,, \qquad k \in \Lambda_B \,.
\end{equation*} 
For $k \in \Lambda_u$, using \eqref{vel amp L2 estimate} and \eqref{vel amp estimates} 
gives 
\begin{equation*}
\norm{D^ja_{(k)} }_{L^2} \leq \frac{\delta_{q+1}^{\frac{1}{2}}}{3 C_* (8 \pi^3)^{\frac{1}{2}}|\Lambda_u|} \ell^{-13j}\, ,\qquad k \in \Lambda_u \,.
\end{equation*}
Thus we can take $C_f = \frac{\delta_{q+1}^{\frac{1}{2}}}{3 C_* (8 \pi^3)^{\frac{1}{2}}|\Lambda_B|}$ and $\zeta = \ell^{-8}$ for $k \in \Lambda_B$ and  $C_f = \frac{\delta_{q+1}^{\frac{1}{2}}}{3 C_* (8 \pi^3)^{\frac{1}{2}} |\Lambda_u|}$  with $\zeta = \ell^{-13} $ for $k \in \Lambda_u$. We are justified in applying the decorrelation lemma with the above chosen parameters because $\ell^{-65} \ll r \lambda_{q+1} = \lambda_{q+1}^{\frac{1}{4}}$ which is the most restrictive condition coming from our choice of parameters.

Applying Lemma~\ref{Decorrelation} gives 
\begin{align}
\label{Lp decorr mag}
\norm{ a_{k, B} \phi_{(k)}}_{L^2} &\leq \frac{\delta_{q+1}^{\frac{1}{2}}}{3(8 \pi^3)^{\frac{1}{2}} |\Lambda_B|} \norm{ \phi_{(k)} }_{L^2} = \frac{\delta_{q+1}^{\frac{1}{2}}}{3 |\Lambda_B|}\\
\label{Lp decorr vel}
\norm{ a_{k, u} \phi_{(k)}}_{L^2} &\leq \frac{\delta_{q+1}^{\frac{1}{2}}}{3(8 \pi^3)^{\frac{1}{2}} |\Lambda_u|} \norm{ \phi_{(k)} }_{L^2} = \frac{\delta_{q+1}^{\frac{1}{2}}}{3 |\Lambda_u|}
\end{align}
since $\phi_{(k)}^2$  was normalized to have unit average over $\T^3$. 

\subsection{Verification of inductive estimates}

Using \eqref{lp mag corrector estimate} and \eqref{Lp decorr mag}  we can verify inductive estimates \eqref{mag inductive assumptions} and \eqref{vel inductive assumptions}.  For the magnetic increment we have the bound
\begin{equation}
\label{mag perturbation l2 }
\norm{d_{q+1} }_{L^2} \leq  \norm{ d_{q+1}^p}_{L^2} +\norm{ d_{q+1}^c }_{L^2} \leq \norm{ \sum_{ k \in \Lambda_B} a_{(k)} D_{(k)} }_{L^2} + \norm{ d_p^c }_{L^2} \leq \frac{1}{3}\delta_{q+1}^{\frac{1}{2}} + \ell^{-8} \lambda_{q+1}^{-1} \leq \frac{1}{2}\delta_{q+1}^{\frac{1}{2}}
\end{equation}
where we used an extra power of $\ell$ to absorb any implicit constants coming from \eqref{lp mag corrector estimate} and  that $\lambda_{q+1}^{-1} \ll \ell^{8}\delta_{q+1} $ in the last inequality.
Similarly, for the velocity we have
\begin{align}
\label{vel perturbation l2}
\norm{w_{q+1}}_{L^2} &\leq \norm{w_{q+1}^p }_{L^2} + \norm{ w_{q+1}^c }_{L^2} \notag \\
& \leq \norm{ \sum_{k \in \Lambda_u }a_{(k)} W_{(k)} + \sum_{k \in \Lambda_B }a_{(k)} W_{(k)} }_{L^2}  + \norm{ w_{q+1}^c }_{L^2} \notag \\  
&\leq \sum_{k \in \Lambda_u} \norm{a_{(k)} W_{(k)} }_{L^2} + \sum_{k \in \Lambda_B} \norm{a_{(k)} W_{(k)} }_{L^2} + \norm{ w_{q+1}^c }_{L^2} \notag \\
&\leq \frac{\delta_{q+1}^{\frac{1}{2}}  }{3} + \frac{\delta_{q+1}^{\frac{1}{2}}  }{3} + \ell^{-13}\lambda_{q+1}^{-1} \leq \frac{3}{4} \delta_{q+1}^{\frac{1}{2}} \,.
\end{align}

Applying standard mollification estimates, using \eqref{mag inductive assumptions}, \eqref{vel inductive assumptions}, and $\eta b - 2 \gg b\beta $
\begin{equation}
\label{mag mollification error}
\norm{B_q - B_{\ell} }_{L^2} \lesssim \norm{ B_q - B_{\ell} }_{C^0} \lesssim \ell \norm{B_q }_{C_{x,t}^1} \lesssim \ell \lambda_{q}^2 \ll \delta_{q+1}^{\frac{1}{2}}
\end{equation}
and 
\begin{equation}
\label{vel mollification error}
\norm{u_q - u_{\ell} }_{L^2} \lesssim \norm{ u_q - u_{\ell} }_{C^0} \lesssim \ell \norm{u_q }_{C_{x,t}^1} \lesssim \ell \lambda_{q}^2 \ll \delta_{q+1}^{\frac{1}{2}} \,.
\end{equation}

Combining \eqref{mag mollification error}, \eqref{mag perturbation l2 },  \eqref{vel mollification error}, and \eqref{vel perturbation l2} for the magnetic field and velocity respectively we obtain
\begin{align*}
&\norm{B_{q} - B_{q+1} }_{L^2} \leq \norm{ B_{q} - B_{\ell}  }_{L^2} + \norm{B_{\ell}  - B_{q+1}}_{L^2} \leq \frac{1}{2}\delta_{q+1}^{\frac{1}{2}} + \norm{d_{q+1}}_{L^2} \leq \delta_{q+1}^{\frac{1}{2}}\\
&\norm{u_{q} - u_{q+1} }_{L^2} \leq \norm{ u_{q} - u_{\ell}  }_{L^2} + \norm{u_{\ell}  - u_{q+1}}_{L^2} \leq \frac{1}{4}\delta_{q+1}^{\frac{1}{2}} + \norm{w_{q+1}}_{L^2} \leq \delta_{q+1}^{\frac{1}{2}}
 \end{align*}
as desired.

Now we check the $L^2$ norm: 
\begin{equation*}
\norm{ B_{q+1} }_{L^2} = \norm{B_{\ell} + d_{q+1} }_{L^2} \leq \norm{B_{\ell}}_{L^2} + \norm{d_{q+1} }_{L^2} \leq 1 - \delta_{q}^{\frac{1}{2}} + \delta_{q+1}^{\frac{1}{2}} \leq 1 - \delta_{q+1}^{\frac{1}{2}}
\end{equation*}
where we used that $2\delta_{q+1}^{\frac{1}{2}} \leq \delta_{q}^{\frac{1}{2}}$. 
The same reasoning shows that $\norm{ u_{q+1} }_{L^2} \leq 1 - \delta_{q+1}^{\frac{1}{2}}$  as well.

We finish by checking the $C_{x,t}^1$ estimate 
 for the velocity and magnetic field at level $q+1$:  using the parameter inequality $\ell^{-1} \ll r^{-1} \ll \lambda_{q+1}$, and the bounds \eqref{principal c1 est}, \eqref{corrector c1 est},  we have 
\begin{equation*}
\| d_{q+1} \|_{C_{x,t}^1} \leq \|d_{q+1}^p \|_{C_{x,t}^1} + \| d_{q+1}^c \|_{C_{x,t}^1} \lesssim \ell^{-2}\lambda_{q+1} r^{-\frac{1}{2}} + \ell^{-7}r^{-\frac{1}{2}} \lesssim \ell^{-2}\lambda_{q+1} r^{-\frac{1}{2}} \leq \lambda_{q+1}^2
\end{equation*}
and
\begin{equation*}
\| w_{q+1} \|_{C_{x,t}^1} \leq \|w_{q+1}^p \|_{C_{x,t}^1} + \| w_{q+1}^c \|_{C_{x,t}^1} \lesssim \ell^{-2}\lambda_{q+1} r^{-\frac{1}{2}} + \ell^{-12}r^{-\frac{1}{2}} \lesssim \ell^{-2}\lambda_{q+1} r^{-\frac{1}{2}} \leq \lambda_{q+1}^2 \, .
\end{equation*}

\section{Reynolds and Magnetic Stress}
\label{stress}
\subsection{Symmetric Inverse divergence}
 In order to define the Reynolds and magnetic stress we need an inverse divergence operator that acts on mean-free vector fields. For the Reynolds stress it suffices to use the inverse-divergence operator from~\cite{DLSZ13}:
\begin{equation*}
 (\mathcal{R}v)^{kl} = \partial_k \Delta^{-1} v^l + \partial_l \Delta^{-1} v^k - \frac{1}{2}(\delta_{kl} + \partial_k \partial_l \Delta^{-1})\div \Delta^{-1} v 
\end{equation*}
where $k, \ell \in \{1,2,3 \}$. The operator $\mathcal{R}$ returns a symmetric, trace-free matrix and satisfies the following key identity for mean-free vector fields: $\div \mathcal{R}(v) = v$. Note that $|\nabla| \mathcal{R}$ is a Calderon-Zygmund operator.

\subsection{Skew-Symmetric Inverse divergence} 
Unlike in previous convex integration schemes, we will also need an inverse divergence that returns skew-symmetric matrices as opposed to symmetric trace-free ones. We will denote this operator as $\mathcal{R}^B$. We want 
$\div  \mathcal{R}^B (f) = f$ where $f: \mathbb{R}^3 \to \mathbb{R}^3$ and $\mathcal{R}^B(f) = - (\mathcal{R}^B(f))^{\top}$. If we define
\begin{equation*}
(\mathcal{R}^Bf)_{ij} :=  \varepsilon_{ijk} (-\Delta)^{-1}(\curl f)_k
\end{equation*}
where $\varepsilon_{ijk}$ is the Levi-Civita tensor and $\div f = 0$, then a direct calculation of the  divergence (contracting along the second index) shows that $\div  \mathcal{R}^B (f) = f$. Again, $|\nabla| \mathcal{R}^B $ is a Calderon-Zygmund operator.

\subsection{Decomposition of the stresses} 
Our goal is now to show that  the stresses $\mathring{R}_{q+1}^u$ and $ \mathring{R}_{q+ 1}^{B}$  satisfy \eqref{vel inductive assumptions} and \eqref{mag inductive assumptions}. However, we must first determine  $\mathring{R}_{q+1}^u$ and $\mathring{R}_{q+1}^B$. To do this, consider the equation satisfied by $(u_{q+1}, B_{q+1})$: 
\begin{align}
\div\mathring{R}_{q+1}^u - \nabla p_{q+1} &= \underbrace{\partial_t w_{q+1} + \div (v_{\ell} \otimes w_{q+1} + w_{q+ 1} \otimes v_{\ell} - B_{\ell} \otimes d_{q+1} - d_{q+1} \otimes B_{\ell}) }_{\div \mathring{R}_{lin}^u + \nabla p_{lin} } \notag\\ 
                                                &+ \underbrace{\div (w_{q+1}^p \otimes w_{q+1}^p - d_{q+1}^p \otimes d_{q+1}^p +  \mathring{R}_{\ell}^u)}_{\div\mathring{R}_{osc}^u + \nabla p_{osc}} \notag \\ 
                                                &+ \underbrace{\div(w_{q+1} \otimes w_{q+1}^c + w_{q+1}^c \otimes w_{q+1}^p - d_{q+1} \otimes d_{q+1}^c - d_{q+1}^c \otimes d_{q+1}^p )}_{ \div \mathring{R}_{corr}^u + \nabla p_{corr}} \notag \\ 
                                                 &+ \div \mathring{R}_{comm}^u - \nabla p_{\ell} 
                                                 \label{velocity reynolds decomp}  
\end{align}
and
\begin{align}
\div\mathring{R}_{q+1}^B &= \underbrace{\partial_t d_{q+ 1}  + \div  (u_{\ell} \otimes d_{q+1} + w_{q +1} \otimes B_{\ell} - B_{\ell} \otimes w_{q+1} - d_{q + 1} \otimes u_{\ell})}_{\div \mathring{R}_{lin}^B } \notag \\
                    &+ \underbrace{\div (w_{q+ 1}^p \otimes  d_{q+ 1}^p - d_{q+ 1}^p \otimes w_{q+1}^p + \mathring{R}_{\ell}^B)}_{ \div \mathring{R}_{osc}^B } \notag \\
                    &+ \underbrace{\div(w_{q+1}^c \otimes d_{q+1} - d_{q+1} \otimes w_{q+1}^c  + w_{q+1}^p \otimes d_{q+1}^c - d_{q+1}^c\otimes w_{q+1}^p   )}_{\div \mathring{R}_{corr}^B  } \notag  \\
                    &+ \div \mathring{R}_{comm}^B \, .
                     \label{magnetic reynolds decomp} 
\end{align}
Applying the symmetric and skew-symmetric inverse divergence operators allows us to define the different parts of the Reynolds and magnetic stresses as follows:
\begin{align}
    \mathring{R}_{lin}^B &= \mathcal{R}^B(\partial_t d_{q+1}) +  u_{\ell} \otimes d_{q+1} - d_{q + 1} \otimes u_{\ell} + w_{q +1} \otimes B_{\ell} - B_{\ell} \otimes w_{q+1}
    \label{lin mag stress}\\
    \mathring{R}_{corr}^B &= w_{q+1}^c \otimes d_{q+1} - d_{q+1} \otimes w_{q+1}^c  + w_{q+1}^p \otimes d_{q+1}^c - d_{q+1}^c\otimes w_{q+1}^p
    \label{corrector mag stress}
\end{align}
and
\begin{align}
    \mathring{R}_{lin}^u &= \mathcal{R}(\partial_t w_{q+1}) + v_{\ell} \mathring{\otimes} w_{q+1} + w_{q+ 1} \mathring{\otimes} v_{\ell} - B_{\ell} \mathring{\otimes} d_{q+1} - d_{q+1} \mathring{\otimes} B_{\ell}
     \label{lin vel Reynolds}\\
    \mathring{R}_{corr}^u &= w_{q+1} \mathring{\otimes} w_{q+1}^c + w_{q+1}^c \mathring{\otimes} w_{q+1}^p - d_{q+1} \mathring{\otimes} d_{q+1}^c - d_{q+1}^c \mathring{\otimes} d_{q+1}^p  
    \label{corrector vel stress} \, .
\end{align}
The associated pressure terms are defined as 
$ p_{lin}  =  2v_{\ell} \cdot w_{q+1}  - 2B_{\ell} \cdot d_{q+1}$ and 
$p_{corr}= w_{q+1} \cdot w_{q+1}^c + w_{q+1}^c \cdot w_{q+1}^p - d_{q+1} \cdot d_{q+1}^c - d_{q+1}^c \cdot d_{q+1}^p$.
In order to determine the equation for $\mathring{R}_{osc}^B$, we use \eqref{mag oscillation cancellation calculation} and the fact that $k_1 \cdot \nabla \phi_{(k)} = k_2\cdot \nabla \phi_{(k)}=0$, and obtain
\begin{align}
&\div(w_{q+ 1}^p \otimes  d_{q+ 1}^p - d_{q+ 1}^p \otimes w_{q+1}^p + \mathring{R}_{\ell}^B) \notag\\
&= \sum_{k \in \Lambda_B} \nabla (a_{(k)}^2) \mathbb{P}_{\neq 0}(\phi_{(k)}^2)(k_1 \otimes k_2 - k_2 \otimes k_1) 
+ \div \Biggl( \sum_{ k \neq k' \in \Lambda_B} a_{(k)}a_{(k')}\phi_{(k)}\phi_{(k')}(k_1\otimes k_2' - k_2'\otimes k_1) \Biggr)
\notag\\ 
&\ +  \div \Biggl( \sum_{ k \in \Lambda_u, k' \in \Lambda_B} a_{(k)}a_{(k')}\phi_{(k)}\phi_{(k')}(k_1\otimes k_2' - k_2'\otimes k_1) \Biggr) \, .
\label{eq:RB:osc}
\end{align}
Here and throughout the paper we use the notation $\nabla f \, (\ell \otimes \ell')$ to denote the contraction on the second component of the tensor, namely $\ell (\ell'\cdot\nabla)f$.
Similarly, to find $\mathring{R}_{osc}^u$ and $p_{osc}$ we appeal to \eqref{eq:osc:cancellation} and apply the divergence operator, to arrive at
\begin{align}
 &\div \left( w_{q+1}^p \otimes w_{q+1}^p - d_{q+1}^p \otimes d_{q+1}^p +  \mathring{R}_{\ell}^u \right) \notag\\
 &= \nabla p_{osc} + \sum_{k \in \Lambda_u}  \nabla (a_{(k)}^2) \mathbb{P}_{\neq 0}(\phi_{(k)}^2)k_1 \otimes k_1   + \sum_{k \in \Lambda_B}  \nabla(a_{(k)}^2) \mathbb{P}_{\neq 0}(\phi_{(k)}^2)(k_1 \otimes k_1 - k_2 \otimes k_2)   \notag\\
 &\ + \div \Biggl( \sum_{ k \neq k' \in \Lambda_u} a_{(k)}a_{(k')}\phi_{(k)}\phi_{(k')} k_1\mathring{\otimes} k_1' + \sum_{ k \neq k' \in \Lambda_B} a_{(k)}a_{(k')}\phi_{(k)}\phi_{(k')} (k_1\mathring{\otimes} k_1' - k_2 \mathring{\otimes} k_2') \Biggr)\notag\\
  &\ + \div \Biggl(    \sum_{k \in \Lambda_u, k' \in \Lambda_B}  a_{(k)}a_{(k')}\phi_{(k)}\phi_{(k')}(k_1\mathring{\otimes} k_1'  + k_1' \mathring{\otimes} k_1)  \Biggr)
\, ,
\label{eq:Ru:osc}
\end{align}
where
\begin{align*}
p_{osc} &= \rho_u  +  \sum_{ k \neq k' \in \Lambda_u} a_{(k)}a_{(k')}\phi_{(k)}\phi_{(k')} k_1 \cdot k_1' + \sum_{ k \neq k' \in \Lambda_B} a_{(k)}a_{(k')}\phi_{(k)}\phi_{(k')} (k_1 \cdot k_1' - k_2 \cdot k_2') \notag\\
&\qquad +  2 \sum_{k \in \Lambda_u, k' \in \Lambda_B}  a_{(k)}a_{(k')}\phi_{(k)}\phi_{(k')} k_1 \cdot k_1' 
\end{align*}
Therefore, from \eqref{eq:RB:osc} we have that the magnetic oscillation stress is given by
\begin{align}
\mathring{R}_{osc}^B &=  
\sum_{k \in \Lambda_B}\mathcal{R}^B \left( \nabla (a_{(k)}^2) \mathbb{P}_{\neq 0}(\phi_{(k)}^2)  (k_1 \otimes k_2 - k_2 \otimes k_1)\right)
\notag\\
&\qquad + \sum_{ k \neq k' \in \Lambda_B} a_{(k)}a_{k', B}\phi_{(k)}\phi_{(k')}(k_1\otimes k_2' - k_2'\otimes k_1) \notag\\
&\qquad  + \sum_{ k \in \Lambda_u, k' \in \Lambda_B } a_{(k)}a_{(k')}\phi_{(k)}\phi_{(k')}(k_1\otimes k_2' - k_2'\otimes k_1) 
    \label{mag osc stress} \, ,
\end{align}
while from \eqref{eq:Ru:osc} we deduce that the Reynolds oscillation stress is defined as
\begin{align}
\mathring{R}_{osc}^u &= \sum_{k \in \Lambda_u} \mathcal{R}\left(  \nabla (a_{(k)}^2) \mathbb{P}_{\neq 0}(\phi_{(k)}^2)k_1 \otimes k_1 \right) + \sum_{k \in \Lambda_B}\mathcal{R} \left( \nabla(a_{(k)}^2) \mathbb{P}_{\neq 0}(\phi_{(k)}^2)(k_1 \otimes k_1 - k_2 \otimes k_2) \right) \notag\\
&\qquad+  \sum_{ k \neq k' \in \Lambda_u} a_{(k)}a_{(k')}\phi_{(k)}\phi_{(k')} k_1\mathring{\otimes} k_1' + \sum_{ k \neq k' \in \Lambda_B} a_{(k)}a_{(k')}\phi_{(k)}\phi_{(k')} (k_1\mathring{\otimes} k_1' - k_2 \mathring{\otimes} k_2')  \notag\\
  &\qquad +   \sum_{k \in \Lambda_u, k' \in \Lambda_B}  a_{(k)}a_{(k')}\phi_{(k)}\phi_{(k')}(k_1\mathring{\otimes} k_1'  + k_1' \mathring{\otimes} k_1)   
 \label{vel osc stress}\, .
\end{align}

In conclusion, we note that the  pressure at level $q+1$ is given by
$p_{q+1} := p_{\ell} - p_{lin} - p_{osc} - p_{corr}$, 
while the  magnetic and Reynolds stresses are given respectively by
\begin{subequations}
\label{stress q+1}
\begin{align}
    \mathring{R}_{q+1}^B &= \mathring{R}_{lin}^B + \mathring{R}_{osc}^B + \mathring{R}_{corr}^B + \mathring{R}_{comm}^B
\label{magnetic stress q+1}\\
\mathring{R}_{q+1}^u &= \mathring{R}_{lin}^u + \mathring{R}_{osc}^u + \mathring{R}_{corr}^u + \mathring{R}_{comm}^u.
\label{velocity stress q+1}    
\end{align}
\end{subequations}

\subsection{Estimates for the magnetic  stress}
In order to estimate the stresses in $L^1$, since Calderon-Zygmund operators are not bounded on $L^1$, we fix an integrability parameter $p$ sufficiently close to $1$, which we will use whenever we have a stress term that involves a Calderon-Zygmund operator.

\subsubsection{Linear Error} 
We first estimate the time derivative term in \eqref{lin mag stress}.
By \eqref{div free magnetic} we have $\partial_t d_{q +1 } = \curl  \curl ( \sum_{ \Lambda_B} \partial_t( a_{(k)} D_k^c)) = \curl  \curl ( \sum_{ \Lambda_B} \partial_t a_{(k)} D_k^c)$. Therefore using \eqref{mag amp estimates}, the definition of $D_k^c$   in \eqref{corrector vector}, and \eqref{linear estimate lemma} we have
\begin{align}
\label{mag time derivative}
\| \mathcal{R}^B(\partial_t d_{q+1}) \|_{L^1} \les \| \mathcal{R}^B(\partial_t d_{q+1}) \|_{L^p} &\lesssim  \sum_{k \in \Lambda_B}\| \mathcal{R}^B \curl  \curl  (\partial_t a_{(k)} D_k^c) \|_{L^p} \notag\\ 
& \lesssim \sum_{k \in \Lambda_B}\| \curl (\partial_t a_{(k)} D_k^c)  \|_{L^p} \notag\\
&\lesssim \sum_{k \in \Lambda_B}\|  a_{(k)} \|_{C_{x,t}^2}\| D_k^c \|_{W^{1,p}} \notag \\
&\lesssim \ell^{-12} \lambda_{q+1}^{-1}  
\end{align}
where we used the fact that $1\leq p \leq 2$ to remove the (good) $r$ factor from the $\|\nabla D_k^c \|_{L^p}$ estimate. 

Next we estimate the high-low interaction terms present in \eqref{magnetic reynolds decomp}.
First we write $d_{q+1} = d_{q+1}^p + d_{q+1}^c $ so we have $ u_{\ell} \otimes d_{q+1} = u_{\ell} \otimes d_{q+1}^p + u_{\ell} \otimes d_{q+1}^c $. We will only show how to estimate one term since the other terms can be handled similarly. By \eqref{vel inductive assumptions}, regularizing properties of mollification, \eqref{lp mag corrector estimate},  and \eqref{linear intermittent  estimates} we have
\begin{align}
\label{mag cross terms}
\norm{ u_{\ell} \otimes d_{q+1} }_{L^1} &\leq \norm{ u_{\ell} \otimes d_{q+1}^p }_{L^1} + \norm{ u_{\ell} \otimes d_{q+1}^c }_{L^1} \notag\\
&\leq \norm{u_{\ell}}_{C^{0}} \norm{d_{q+1}^p }_{L^1} + \norm{ u_{\ell} }_{L^2}\norm{ d_{q+1}^c }_{L^2} \notag \\
&\lesssim \ell^{-\frac{3}{2}}  \norm{d_{q+1}^p }_{L^1} + \norm{ d_{q+1}^c }_{L^2} \notag \\
&\lesssim \ell^{-\frac{3}{2}} \ell^{-2} r^{\frac{1}{2}} + \ell^{-7} \lambda_{q+1}^{-1} \notag \\
&\lesssim \ell^{-4}r^{\frac{1}{2}}
\end{align}
where we used that $\lambda_{q+1}^{-1} \ll r \ll \ell $. The same estimate also holds for the term $w_{q +1} \otimes B_{\ell}$. Therefore,  
\begin{align}
\label{magnetic linear estimate}
\norm{\mathring{R}_{lin}^B }_{L^1} &\lesssim \norm{ \mathcal{R}^B(\partial_t d_{q+1}) }_{L^p} + \norm{u_{\ell} \otimes d_{q+1} - d_{q + 1} \otimes u_{\ell} + w_{q +1} \otimes B_{\ell} - B_{\ell} \otimes w_{q+1} }_{L^1} \notag \\
&\lesssim \ell^{-12}\lambda_{q+1}^{-1} + \ell^{-4}r^{\frac{1}{2}} \notag \\
&\lesssim \ell^{-4} r^{\frac{1}{2}}.
\end{align}

\subsubsection{Oscillation Error}

In order to estimate the magnetic oscillation stress  we use \eqref{mag osc stress} to decompose it into two parts:
\begin{align*}
\mathring{R}_{osc}^B &=  E_1^B + E_2^B
\end{align*}
where 
\begin{align*}
    E_1^B &:= \sum_{k \in \Lambda_B}\mathcal{R}^B \left( \nabla (a_{(k)}^2) \mathbb{P}_{\neq 0}(\phi_{(k)}^2)  (k_1 \otimes k_2 - k_2 \otimes k_1)\right)  \\
    E_2^B &:= \!\!\! \sum_{ k \neq k' \in \Lambda_B} a_{(k)}a_{k', B}\phi_{(k)}\phi_{(k')}(k_1\otimes k_2' - k_2'\otimes k_1) \notag   + \!\!\!  \sum_{ k \in \Lambda_u,  k' \in \Lambda_B} a_{(k)}a_{(k')}\phi_{(k)}\phi_{(k')}(k_1\otimes k_2' - k_2'\otimes k_1) \, .
\end{align*}
First note that since $\div  \left(a_{(k)}^2 \mathbb{P}_{\neq 0}(\phi_{(k)}^2)(k_1 \otimes k_2 -k_2 \otimes k_1) \right) = \nabla (a_{(k)}^2) \mathbb{P}_{\neq 0}(\phi_{(k)}^2) (k_1 \otimes k_2 - k_2 \otimes k_1)$, we can conclude that $\nabla (a_{(k)}^2) \mathbb{P}_{\neq 0}(\phi_{(k)}^2) (k_1 \otimes k_2 - k_2 \otimes k_1)$ is mean free. A calculation also shows that
 $\div \div  (a_{(k)}^2 \mathbb{P}_{\neq 0}(\phi_{(k)}^2)(k_1 \otimes k_2 - k_2 \otimes k_1))  = 0$
so $E_1^B$ is well-defined.
Therefore it suffices to estimate  $E_1^B$ and $E_2^B$ individually. 
For $E_1^B$, we note that since $\phi_{(k)}$ is $\lambda_{q+1}r$ periodic, so is $\phi_{(k)}^2$. Therefore the minimal active frequency in $\mathbb{P}_{\neq 0} \phi_{(k)}^2$ is $\lambda_{q+1} r$; we have that $\mathbb{P}_{\neq 0}(\phi_{(k)}^2) = \mathbb{P}_{\geq (\lambda_{q+1} r/2)}(\phi_{(k)}^2)$. This allows us to exploit the frequency separation between $\nabla (a_{(k)}^2)$ and $\phi_{(k)}^2$ and gain a factor of $\lambda_{q+1} r$ from the application of  $\mathcal{R}^{B}$.
To be precise, we recall Lemma B.1 from \cite{BV}: 

\begin{lemma}
\label{commutator estimate}

Fix parameters $1 \leq \zeta  < \kappa, p \in (1,2]$, and assume there exists an $L \in \mathbb{N}$ such that 
\begin{equation*}
    \zeta^L \leq \kappa^{L-2}
\end{equation*}
Let $ a \in C^L(\mathbb{T}^3)$ be such that there exists $C_a > 0$ with
\begin{equation*}
    \norm{ D^j a}_{C^0} \leq C_a \zeta^j
\end{equation*}
for all $0 \leq j \leq L$. Assume also that $f \in L^{p}(\mathbb{T}^3)$ is such that $\int_{\mathbb{T}^3}a(x) \mathbb{P}_{\geq \kappa} f(x) dx = 0 $. Then we have 
\begin{equation*}
    \norm{ |\nabla|^{-1} (a \mathbb{P}_{\geq \kappa}f )}_{L^P} \lesssim C_a \frac{\norm{f }_{L^p}}{\kappa}
\end{equation*}
where the implicit constant depends only on $p$ and $L$.
\end{lemma}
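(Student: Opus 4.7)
My plan is to exploit the frequency separation between $a$ (with derivatives saturating at scale $\zeta$) and the high-frequency factor $\mathbb{P}_{\geq \kappa} f$, based on the elementary fact that $|\nabla|^{-1}$ gains a factor of $\kappa^{-1}$ when acting on functions whose Fourier support lies in $\{|\xi| \gtrsim \kappa\}$. The mean-free hypothesis $\int_{\T^3} a\, \mathbb{P}_{\geq \kappa} f\, dx = 0$ guarantees the product is mean-zero, so that $|\nabla|^{-1}$ is unambiguously defined on it as a mean-zero $L^p$ function on $\T^3$.

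The first step is to perform a Littlewood--Paley splitting $a = a^{\mathrm{lo}} + a^{\mathrm{hi}}$ at scale $\kappa/2$, with $a^{\mathrm{lo}} := \mathbb{P}_{<\kappa/2} a$, and treat the two resulting products separately. For $a^{\mathrm{lo}} \mathbb{P}_{\geq \kappa} f$, the convolution of Fourier supports places its spectrum in $\{|\xi| \geq \kappa/2\}$, so one may harmlessly insert $\mathbb{P}_{\geq \kappa/2}$ in front of $|\nabla|^{-1}$. A rescaling $\xi \mapsto \xi/\kappa$ reduces the multiplier bound $\||\nabla|^{-1}\mathbb{P}_{\geq \kappa/2}\|_{L^p \to L^p} \lesssim \kappa^{-1}$ to the standard Mikhlin--H\"ormander estimate for the fixed symbol $|\xi|^{-1}\chi(\xi)$, with $\chi$ a smooth radial cutoff supported away from the origin. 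Combined with the trivial bound $\|a^{\mathrm{lo}}\|_{C^0} \leq \|a\|_{C^0} \leq C_a$, this yields $\||\nabla|^{-1}(a^{\mathrm{lo}}\mathbb{P}_{\geq \kappa} f)\|_{L^p} \lesssim C_a \kappa^{-1}\|f\|_{L^p}$.

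For the high-frequency piece $a^{\mathrm{hi}} \mathbb{P}_{\geq \kappa} f$ the strategy reverses: no spectral gap can be exploited on $|\nabla|^{-1}$, but $a^{\mathrm{hi}}$ itself is tiny in $C^0$. Bernstein's inequality together with the top-order derivative hypothesis gives $\|a^{\mathrm{hi}}\|_{C^0} \lesssim \kappa^{-L}\|D^L a\|_{C^0} \leq C_a(\zeta/\kappa)^L$. The mean-free assumption on $a\mathbb{P}_{\geq \kappa} f$, together with the automatic mean-zero property of $a^{\mathrm{lo}}\mathbb{P}_{\geq \kappa} f$, forces $a^{\mathrm{hi}}\mathbb{P}_{\geq \kappa} f$ to be mean-zero as well, so $|\nabla|^{-1}$ acts boundedly on it on $\T^3$ (by Hardy--Littlewood--Sobolev, or equivalently by convolution with the locally integrable three-dimensional Riesz kernel). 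H\"older then yields a bound $\lesssim C_a (\zeta/\kappa)^L\|f\|_{L^p}$, and the parameter hypothesis $\zeta^L \leq \kappa^{L-2}$ converts this into $\lesssim C_a \kappa^{-2}\|f\|_{L^p}$, which is stronger than required. The only technical point demanding care is the scale-invariant Mikhlin-type bound in the first step; no genuine commutator estimate is needed, and the derivative assumption on $a$ enters only to absorb the single ``bad'' high-frequency tail, so I do not anticipate serious obstacles.
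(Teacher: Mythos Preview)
Your proof is correct. The paper does not actually prove this lemma; it merely quotes it as Lemma~B.1 from \cite{BV}, so there is no ``paper's own proof'' to compare against in this document. Your argument---split $a$ into low and high frequency parts at scale $\kappa/2$, use the spectral gap to gain $\kappa^{-1}$ from $|\nabla|^{-1}$ on the low-frequency piece, and use Bernstein plus the $L^1$-kernel bound for $|\nabla|^{-1}$ on $\T^3$ to handle the high-frequency tail---is precisely the standard route and is essentially how the lemma is proved in the cited reference. One cosmetic remark: the inequality $\|a^{\mathrm{lo}}\|_{C^0} \leq \|a\|_{C^0}$ should be $\lesssim$ rather than $\leq$, since a smooth Littlewood--Paley cutoff need not have a nonnegative convolution kernel; this changes nothing.
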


Using \eqref{G estimates} we see that we can apply Lemma~\ref{commutator estimate} with $a = \nabla (a_{(k)}^2), f =  \phi_{(k)}^2$ and parameter values $\kappa = \lambda_{q+1} r $, $\zeta = \ell^{-5}$, $C_a = \ell^{-9}$, and $L = 3$. We are justified in these choices because $\zeta^{3} = \ell^{-15} = \lambda_{q+1}^{15\eta} \leq \lambda_{q+1}^{\frac 14}$. Applying Lemma~\ref{commutator estimate} and \eqref{linear estimate lemma} yields
\begin{align}
\norm{E_1^B}_{L^1} 
\lesssim \norm{E_1^B}_{L^p} 
&\leq \sum_{ k \in \Lambda_B} 
\norm{\mathcal{R}^B \left( \nabla (a_{(k)}^2)\mathbb{P}_{\geq (\lambda_{q+1} r/2)}(\phi_{(k)}^2) ( k_1 \otimes k_2 -k_2 \otimes k_1 )\right)}_{L^p} \notag \\
& \lesssim \ell^{-9} \lambda_{q+1}^{-1} r^{-1} \norm{\phi_{(k)}^2 ( k_1 \otimes k_2 -k_2 \otimes k_1 )}_{L^p} \notag \\
& \lesssim \ell^{-9} \lambda_{q+1}^{-1} r^{-1} \norm{\phi_{(k)}}_{L^{2p}}^2 \notag \\
& \lesssim \ell^{-9} \lambda_{q+1}^{-1} r^{-1} r^{\frac{1}{p} - 1} \notag \\
& \lesssim \ell^{-9} \lambda_{q+1}^{-1} r^{\frac{1}{p} -2} \, . 
\label{eq:Shaq}
\end{align}

For the second term in the decomposition of $\RR_{osc}^B$, namely $E_2^B$, we apply the product estimate \eqref{prod_est_highfreq},  along with the magnitude bounds \eqref{mag amp estimates} and  \eqref{vel amp estimates} to derive
\begin{align*}
\norm{E_2^B }_{L^1} 
&\leq  \sum_{ k \neq k' \in \Lambda_B} \norm{a_{(k)}a_{(k')}\phi_{(k)} \phi_{(k')} }_{L^1} + 
\sum_{ k \in \Lambda_u ,  k' \in \Lambda_B }\norm{a_{(k)} a_{(k')}  \phi_{(k)} \phi_{(k')} }_{L^1} \\ 
&\lesssim \sum_{ k \neq k' \in \Lambda_B }\norm{a_{(k)} }_{C^0}^2\norm{ \phi_{(k)} \phi_{(k')}}_{L^1} + 
\sum_{ k \in \Lambda_u, k' \in \Lambda_B  } \norm{a_{(k)}}_{C^0}\norm{a_{(k')}}_{C^0}   \norm{ \phi_{(k)} \phi_{(k')}}_{L^1}  \\
& \lesssim \ell^{-4}r \, .
\end{align*} 
Combining the estimates for $E_1^B$ and $E_2^B$ we conclude that
\begin{equation}
\label{magnetic oscillation estimate}
\norm{\mathring{R}_{osc}^B}_{L^1} \lesssim \ell^{-4}r + \ell^{-9} \lambda_{q+1}^{-1} r^{\frac{1}{p} -2} \lesssim \ell^{-9} \lambda_{q+1}^{-1} r^{\frac{1}{p} -2} 
\end{equation}
upon recalling that $r = \lambda_{q+1}^{-\frac 34}$, and that $p$ is close to $1$.

\subsubsection{Corrector Error}
Due to the smallness of the corrector terms, to estimate \eqref{corrector mag stress}, it suffices to simply apply Cauchy-Schwarz and use \eqref{lp mag corrector estimate}, \eqref{lp vel corrector estimate}, \eqref{mag perturbation l2 }, and \eqref{vel perturbation l2}:
\begin{align}
\label{magnetic corrector estimate}
\norm{\mathring{R}_{corr}^B }_{L^1} &\lesssim \norm{ w_{q+1}^c \otimes d_{q+1} }_{L^1} + \norm{ w_{q+1}^p \otimes d_{q+1}^c }_{L^1} \notag \\ 
&\lesssim \norm{w_{q+1}^c }_{L^2}\norm{d_{q+1} }_{L^2} +  \norm{ w_{q+1}^p}_{L^2} \norm{ d_{q+1}^c }_{L^2} \notag \\
& \lesssim \ell^{-12} \lambda_{q+1}^{-1} \delta_{q+1}^{\frac{1}{2}} + \ell^{-7}\lambda_{q+1}^{-1}\delta_{q+1}^{\frac{1}{2}}  \notag \\
& \lesssim \ell^{-12} \lambda_{q+1}^{-1} \delta_{q+1}^{\frac{1}{2}} \, .
\end{align}
This concludes the estimates necessary to bound $\RR_{osc}^B$.

\subsection{Estimates for the Reynolds stress}
\subsubsection{Linear Error}
To  bound \eqref{lin vel Reynolds} we proceed just as we did for \eqref{lin mag stress}. As we had for the magnetic perturbations, by \eqref{div free velocity} we have $\partial_t w_{q +1 } = \curl  \curl ( \sum_{ \Lambda_u} \partial_ta_{(k)} W_k^c  + \sum_{  \Lambda_B} \partial_ta_{(k)} W_k^c)$. Therefore we can obtain the same estimate as in \eqref{mag time derivative} except we account for the worse amplitudes estimates we get for  $k \in \Lambda_u $:
\begin{equation*}
\norm{ \mathcal{R} \partial_t w_{q+1} }_{L^1} \lesssim \sum_{k \in \Lambda_u}\norm{  a_{(k)} }_{C_{x,t}^2}\norm{ \nabla W_k^c }_{L^p} \lesssim \ell^{-22} \lambda_{q+1}^{-1} \,.
\end{equation*}
Furthermore, an examination of \eqref{mag cross terms} shows that the same bound will hold for cross terms in the Reynolds stress (again using the fact that $\lambda_{q+1}^{-1} \ll r \ll \ell $ ):
\begin{equation}
\label{vel cross terms}
\norm{ v_{\ell} \mathring{\otimes} w_{q+1} + w_{q+ 1} \mathring{\otimes} v_{\ell} - B_{\ell} \mathring{\otimes} d_{q+1} - d_{q+1} \mathring{\otimes} B_{\ell} }_{L^1} \lesssim \ell^{-4} r^{\frac{1}{2}} \,. 
\end{equation} 
Therefore we obtain the same bound for the linear Reynolds stress as we had obtained earlier for the linear magnetic stress in \eqref{magnetic linear estimate}:
\begin{equation}
\label{Reynolds linear estimate}
\norm{ \mathring{R}_{lin}^u }_{L^1} \leq \norm{ \mathcal{R} \partial_t w_{q+1} }_{L^1} +  \norm{ v_{\ell} \mathring{\otimes} w_{q+1} + w_{q+ 1} \mathring{\otimes} v_{\ell} - B_{\ell} \mathring{\otimes} d_{q+1} - d_{q+1} \mathring{\otimes} B_{\ell} }_{L^1} \lesssim \ell^{-4} r^{\frac{1}{2}} \,.
\end{equation}

\subsubsection{Oscillation error} 

In order to estimate \eqref{vel osc stress} we decompose it into three terms:
\begin{align*}
    \mathring{R}_{osc}^u  &= E_{1,1}^u + E_{1,2}^u + E_2^u \, ,
\end{align*}
where 
\begin{align*}
    E_{1,1}^u &:= \sum_{k \in \Lambda_u} \mathcal{R}\left(  \nabla (a_{(k)}^2) \mathbb{P}_{\neq 0}(\phi_{(k)}^2)k_1 \otimes k_1 \right) \\
    E_{1,2}^u &:= \sum_{k \in \Lambda_B}\mathcal{R} \left( \nabla(a_{(k)}^2) \mathbb{P}_{\neq 0}(\phi_{(k)}^2)(k_1 \otimes k_1 - k_2 \otimes k_2) \right) 
\end{align*}
and $E_2^u$ is defined by the high frequency terms on the last two lines on the right side of \eqref{vel osc stress}. 

To estimate $E_{1,1}^u$ we again apply Lemma~\ref{commutator estimate} with  the same parameters,  except now $C_a = \ell^{-14}$ and $\zeta = \ell^{-10}$. This leads to
\begin{align*}
\norm{E_{1,1}^u }_{L^1}  \lesssim  \sum_{k \in \Lambda_u} \norm{\mathcal{R}\left(  \nabla (a_{(k)}^2) \mathbb{P}_{\neq 0}(\phi_{(k)}^2)k_1 \otimes k_1 \right) }_{L^p} 
                 & \lesssim \ell^{-14} \lambda_{q+1}^{-1} r^{-1}  \norm{ (\phi_{(k)}^2)k_1 \otimes k_1 }_{L^p}  \\
                 & \lesssim \ell^{-14} \lambda_{q+1}^{-1} r^{-1} \|\phi_{(k)}\|_{L^{2p}}^2\\
                 & \lesssim \ell^{-14} \lambda_{q+1}^{-1}  r^{\frac{1}{p} -2} \, .
\end{align*}
For $E_{1,2}^u$, we can just use the estimate for $E_1^B$ since only the direction is different, and $\RSZ^B$ obeys the same bounds as $\RSZ$. From \eqref{eq:Shaq} we then obtain
\begin{equation*}
\norm{E_{1,2}^u }_{L^1} \lesssim \ell^{-9} \lambda_{q+1}^{-1} r^{\frac{1}{p} -2} \,.
\end{equation*}

Lastly we bound the $E_2^u$ stress given by the last two lines on the right side of \eqref{vel osc stress}.
Since we need only consider the amplitude functions themselves and not their derivatives, setting $j=0$ in  \eqref{mag amp estimates} and \eqref{vel amp estimates} we arrive at
\begin{align*}
\norm{E_2^u }_{L^1} &\leq \Biggl( \sum_{ k \neq k' \in \Lambda_u }   +   \sum_{ k \neq k' \in \Lambda_B }   +\sum_{ k \in \Lambda_u, k' \in \Lambda_B }\Biggr) \norm{a_{(k)}a_{(k')}\phi_{(k)} \phi_{(k')} }_{L^1}   \\
   &\lesssim \ell^{-4}r.
\end{align*} 
Combining the estimates obtained for the three parts in which we have decomposed $\RR_{osc}^u$ we obtain
\begin{equation}
\label{Reynolds oscillation estimate}
\norm{\mathring{R}_{osc}^u }_{L^1} \lesssim \ell^{-4}r + \ell^{-9} \lambda_{q+1}^{-1} r^{\frac{1}{p} -2} + \ell^{-14} \lambda_{q+1}^{-1}r^{\frac{1}{p} -2} \lesssim \ell^{-14} \lambda_{q+1}^{-1}r^{\frac{1}{p} -2}\,.
\end{equation}

\subsubsection{Corrector Error}
First, note that by inspection one can check that \eqref{corrector vel stress} can be written in the following  symmetric way as 
\begin{equation*}
w_{q+1}^c \mathring{\otimes} w_{q+1}^c + w_{q+1}^p \mathring{\otimes} w_{q+1}^c + w_{q+1}^c \mathring{\otimes} w_{q+1}^p - (d_{q+1}^c \mathring{\otimes} d_{q+1}^c + d_{q+1}^p \mathring{\otimes} d_{q+1}^c + d_{q+1}^c \mathring{\otimes} d_{q+1}^p) \,. 
\end{equation*}
We now proceed  to estimate $\mathring{R}_{corr}^u$ as we did for $\mathring{R}_{corr}^B$:
\begin{align}
\label{Reynolds corrector estimate}
\norm{\mathring{R}_{corr}^u }_{L^1} &\leq \norm{ w_{q+1} \mathring{\otimes} w_{q+1}^c }_{L^1} + \norm{w_{q+1}^c \mathring{\otimes} w_{q+1}^p}_{L^1} + \norm{d_{q+1} \mathring{\otimes} d_{q+1}^c}_{L^1} + \norm{d_{q+1}^c \mathring{\otimes} d_{q+1}^p}_{L^1} \notag \\
&\lesssim  \norm{w_{q+1} }_{L^2} \norm{w_{q+1}^c }_{L^2} + \norm{w_{q+1}^c }_{L^2} \norm{ w_{q+1}^p }_{L^2} + \norm{ d_{q+1}}_{L^2} \norm{d_{q+1}^c }_{L^2} + \norm{d_{q+1}^c }_{L^2} \norm{ d_{q+1}^p }_{L^2} \notag\\
& \lesssim \delta_{q+1}^{\frac{1}{2}} \ell^{-12}\lambda_{q+1}^{-1} + \delta_{q+1}^{\frac{1}{2}} \ell^{-7}\lambda_{q+1}^{-1} \notag\\
& \lesssim \delta_{q+1}^{\frac{1}{2}}\ell^{-12}\lambda_{q+1}^{-1} \,.
\end{align}

\subsection{Verification of Inductive estimate for  magnetic and Reynolds Stress}

Finally, we verify \eqref{mag inductive assumptions} and \eqref{vel inductive assumptions} for the stresses. Using \eqref{magnetic linear estimate}, \eqref{magnetic oscillation estimate}, \eqref{magnetic corrector estimate}, and \eqref{mag commutator estimate} we have  
\begin{align*}
\|\mathring{R}_{q+1}^B \|_{L^1} &\leq \| \mathring{R}_{lin}^B \|_{L^1} +  \| \mathring{R}_{osc}^B\|_{L^1} +  \|\mathring{R}_{corr}^B \|_{L^1} +  \|\mathring{R}_{comm}^B \|_{L^1} \\
&\lesssim   \ell^{-4} r^{\frac{1}{2}} + \ell^{-9} \lambda_{q+1}^{-1} r^{\frac{1}{p} -2}  + \ell^{-12} \lambda_{q+1}^{-1} \delta_{q+1}^{\frac{1}{2}}  + \ell^2 \lambda_{q}^4 \\
&\leq \ell^{-5} r^{\frac{1}{2}} + \ell^{-10} \lambda_{q+1}^{-1} r^{\frac{1}{p} -2} + \ell^{-13} \lambda_{q+1}^{-1} + \ell^{\frac 32} \lambda_{q}^4\\
&\leq 2 (\ell^{-10} \lambda_{q+1}^{-1} r^{\frac{1}{p} -2} + \ell^{\frac 32} \lambda_{q}^4) \\
& \leq c_B \delta_{q+2} \, ,
\end{align*}
upon taking $p$   close to $1$, and $a$ sufficiently large to make the last inequality true.  Finally, we estimate the velocity Reynolds stress. From \eqref{Reynolds linear estimate}, \eqref{Reynolds oscillation estimate}, \eqref{Reynolds corrector estimate}, and \eqref{vel commutator estimate} we derive
\begin{align*}
\norm{\mathring{R}_{q+1}^u }_{L^1} &\leq \norm{ \mathring{R}_{lin}^u }_{L^1} +  \norm{ \mathring{R}_{osc}^u}_{L^1} +  \norm{\mathring{R}_{corr}^u }_{L^1} +  \norm{\mathring{R}_{comm}^u }_{L^1} \\
&\lesssim   \ell^{-4} r^{\frac{1}{2}}  + \ell^{-14} \lambda_{q+1}^{-1} r^{\frac{1}{p} -2}  + \ell^{-12} \lambda_{q+1}^{-1} \delta_{q+1}^{\frac{1}{2}}  + \ell^2 \lambda_{q}^4 \\
&\leq \ell^{-5} r^{\frac{1}{2}} + \ell^{-15} \lambda_{q+1}^{-1} r^{\frac{1}{p} -2} + \ell^{-13} \lambda_{q+1}^{-1} + \ell^{\frac{3}{2}} \lambda_{q}^4\\
&\leq 2 (\ell^{-15} \lambda_{q+1}^{-1} r^{\frac{1}{p} -2} + \ell^{\frac{3}{2}} \lambda_{q}^4) \\
& \leq c_u \delta_{q+2} \,,  
\end{align*}
as above. This concludes the proof of the main iteration in Proposition~\ref{prop:main:iteration}.

\section{Proof of Theorem~\ref{thm:main}}
\label{final proof}
Having established Proposition~\ref{prop:main:iteration}, we now turn to the proof of Theorem~\ref{thm:main}. Consider the mean-free, incompressible vector fields $u_0$ and $B_0$ given by 
\begin{equation}
\label{initial fields}
u_0 = \frac{t}{(2\pi)^{\frac{3}{2}}}(\sin( \lambda_0^{\frac{1}{2}} x_3), 0, 0   ) \quad B_0 = \frac{t}{(2\pi)^{3}}( \sin(\lambda_0^{\frac{1}{2}} x_3),  \cos(\lambda_0^{\frac{1}{2}} x_3 ), 0 ) \,.
\end{equation}
A calculation shows that $u_0 \cdot \nabla B_0 - B_0 \cdot \nabla u_0 = 0 $ and that $u_0 \cdot \nabla u_0 - B_0 \cdot \nabla B_0  = 0$. Therefore, $u_0$ and $B_0$ satisfy \eqref{relax mhd 1} and \eqref{relax mhd 2}
with 
\begin{equation*}
\mathring{R}_0^u = 
\frac{1}{\lambda_0^{\frac{1}{2}} (2 \pi)^{\frac{3}{2}}   }
\begin{bmatrix}
0 & 0 & -\cos(\lambda_0^{\frac{1}{2}} x_3 )\\
0 & 0 & 0 \\
-\cos(\lambda_0^{\frac{1}{2}} x_3)&  0 & 0 
\end{bmatrix}   
\end{equation*}
and 
\begin{equation*}
\mathring{R}_0^B = 
\frac{1}{\lambda_0^{\frac{1}{2}} (2 \pi)^3   }
\begin{bmatrix}
0 & 0 & -\cos(\lambda_0^{\frac{1}{2}} x_3 )\\
0 & 0 & \sin(\lambda_0^{\frac{1}{2}}x_3 ) \\
\cos(\lambda_0^{\frac{1}{2}} x_3)&  -\sin(\lambda_0^{\frac{1}{2}}x_3 ) & 0 
\end{bmatrix}.
\end{equation*} 
We have that $\norm{\mathring{R}_0^B }_{L^1}, \norm{ \mathring{R}_0^u }_{L^1} \leq \lambda_0^{-\frac{1}{2}} < \lambda_0^{-2b\beta} = \delta_1$. Therefore by taking $a$ sufficiently large we have $\norm{ \mathring{R}_0^u }_{L^1} \leq c_u \delta_1$ and $\norm{ \mathring{R}_0^B }_{L^1} \leq c_B\delta_1$.   Similarly we can show that the other conditions in \eqref{mag inductive assumptions} and \eqref{vel inductive assumptions} are all satisfied (possibly by taking $a$ larger). Therefore, we can apply Proposition~\ref{prop:main:iteration} to get the existence of a sequence of iterates $(u_{q+1}, \mathring{R}_{q+1}^{u}, B_{q+1}, \mathring{R}_{q+1}^B )$ which satisfy \eqref{relax mhd} and obey the bounds \eqref{mag inductive assumptions}--\eqref{main inductive assumption}.

By interpolation, we have for any $\beta' \in (0,\frac{\beta}{2 + \beta} )$ the sequence of velocity and magnetic increments is summable in $H^{\beta'}$, i.e.
\begin{align*}
    &\sum_{q \geq 0} \norm{ u_{q+1} - u_q }_{H^{\beta'}} +\sum_{q \geq 0} \norm{ B_{q+1} - B_q }_{H^{\beta'}} \\
    &\leq  \sum_{q \geq 0} \norm{ u_{q+1} - u_q }_{L^2}^{1- \beta'}\norm{ u_{q+1} - u_q }_{H^1}^{\beta'} + 
     \sum_{q \geq 0} \norm{ B_{q+1} - B_q }_{L^2}^{1- \beta'}\norm{ B_{q+1} - B_q }_{H^1}^{\beta'}\\
    &\lesssim \sum_{q \geq 0} \delta_{q+1}^{\frac{1-\beta'}{2}}\lambda_{q+1}^{2 \beta' } =  
    \sum_{ q \geq 0} \lambda_{q+1}^{-\beta(1 - \beta')  + 2\beta'  } \lesssim 1.
\end{align*}
The sequence $\{(u_q,B_q)\}_{q\geq 0}$ is hence Cauchy  and   we may define a limiting pair $(u ,B) = \lim_{q \to \infty} (u_q, B_q)$. This pair satisfies \eqref{eq:ideal:mhd} because $ \lim_{q \to \infty} \mathring{R}_{q}^u =   \lim_{q \to \infty} \mathring{R}_{q}^B = 0 $ in $C([0,1], L^1)$. Therefore, we have a weak solution of \eqref{eq:ideal:mhd} which lies in $C([0,1]; H^{\beta'})$, proving the first part of Theorem~\ref{thm:main} replacing $\beta$ by $\beta'$.
 
Now we will show that the magnetic helicity of the weak solution of \eqref{eq:ideal:mhd} at least doubles from time $0$ to time $1$, and is nonzero at time $1$. The vector field $B_0$ has associated with it the vector potential $A_0$:  
\begin{equation*}
A_0 =  \frac{t }{\lambda_{0}^{\frac{1}{2}} (2\pi)^3 }(  \sin(\lambda_0^{\frac{1}{2}} x_3),  \cos(\lambda_0^{\frac{1}{2}} x_3 ), 0   ) 
\end{equation*}
Therefore we can compute the first iterate of the magnetic helicity,  $\HH_{0,B,B}(t) := \int_{\mathbb{T}^3} A_0\cdot B_0 $, as
\begin{equation*}
\HH_{0,B,B}(t) = \int_{\mathbb{T}^3} A_0 \cdot B_0 dx = \frac{t^2}{(2 \pi)^6 \lambda_0^{\frac{1}{2}}} \int_{\mathbb{T}^3} |B_0|^2 dx = \frac{t^2}{(2 \pi)^3 \lambda_0^{\frac{1}{2}}} = \frac{t^2}{(2 \pi)^3 a^{\frac{1}{2}}}
\end{equation*}
Next, we wish to estimate the deviation between this quantity and  the magnetic helicity for the limiting vector field $B$:
\begin{equation*}
|\HH_{B,B}(t) - \HH_{0,B,B}(t)| = \left| \int_{\mathbb{T}^3} A \cdot B dx- \int_{\mathbb{T}^3} A_0 \cdot B_0dx \right| \leq \norm{A - A_0 }_{L^2} \norm{B}_{L^2} + \norm{ A_0 }_{L^2} \norm{ B- B_0}_{L^2} \, .
\end{equation*} 
Using \eqref{mag inductive assumptions} we have that $\norm{ B}_{L^2} \leq 1$ and by construction,  $A_0: \norm{ A_0}_{L^2} \leq  \lambda_0^{-\frac{1}{2}} (2\pi)^{-\frac{3}{2}}$. 
Therefore, we have   
\begin{align*}
|\HH_{B,B}(t) - \HH_{0,B,B}(t)| \leq \norm{A - A_0 }_{L^2} + \frac{1}{\lambda_0^{\frac{1}{2}} (2\pi)^{\frac{3}{2}} } \norm{ B- B_0 }_{L^2} \,.
\end{align*}
Applying the triangle inequality, \eqref{main inductive assumption}, and using the fact that $b > 2$ and consequently that $b^{q} \geq bq $  for $q \geq 1$ we can estimate $\|B- B_0 \|_{L^2}$ as
\begin{equation*}
\norm{B - B_0 }_{L^2} \leq \sum_{q \geq 0} \norm{ B_{q+1} - B_q }_{L^2} \leq   \sum_{q \geq 0} \delta_{q+1}^{\frac{1}{2}} =   \sum_{q \geq  1} (a^{b^q})^{-\beta} \leq \sum_{q \geq  1} (a^{bq})^{-\beta} = \frac{a^{-\beta b}}{1- a^{-\beta b} }.
\end{equation*}
To estimate $\norm{ A-A_0 }_{L^2}$ we use the fact that we can take $A_q$ to be divergence free  for all $q \in \mathbb{N}$. This choice allows us to recover $A_q$ using the Biot-Savart law:
\begin{equation*}
\norm{ A-A_0}_{L^2} \leq \sum_{q \geq 0} \norm{ A_{q+1} - A_q }_{L^2} \leq \sum_{q \geq 0} \norm{ \curl (-\Delta)^{-1}(B_{q+1} - B_q)   }_{L^2}.  
\end{equation*}
Now, recall that $B_{q+ 1} - B_q = d_{q+1} + B_{\ell} - B_{q}$, and therefore 
\begin{equation*}
\norm{ \curl (-\Delta)^{-1}(B_{q+1} - B_q) }_{L^2} \leq \norm{\curl (-\Delta)^{-1} d_{q+1}  }_{L^2} + \norm{ \curl (-\Delta)^{-1} (B_{\ell}  - B_q  )  }_{L^2}.
\end{equation*}
We first estimate the $\norm{ \curl (-\Delta)^{-1} (B_{\ell}  - B_q  )  }_{L^2}$ term. Note that $B_{\ell} - B_{q}$ has mean zero, and thus $ \norm{ \curl (-\Delta)^{-1} (B_{\ell}  - B_q  )  }_{L^2} \leq \norm{ B_{\ell}  - B_q }_{L^2}$. Furthermore, using standard mollification estimates and \eqref{mag inductive assumptions} we obtain the bound
\begin{equation*}
\norm{B_{q} - B_{\ell} }_{L^2} \leq (2\pi)^\frac{3}{2} \ell \lambda_{q}^2 = (2 \pi)^{\frac{3}{2}} \lambda_{q+1}^{-\eta + \frac{2}{b}}  \, .
\end{equation*}
Summing this expression gives 
\begin{equation*}
\sum_{q \geq 0}\|B_q - B_{\ell} \|_{L^2} \leq (2 \pi)^{\frac{3}{2}}\sum_{q \geq  0} \lambda_{q+1}^{-\eta + \frac{2}{b}} \leq (2 \pi)^{\frac{3}{2}}\sum_{q \geq 1} (a^{bq}) ^{-\eta + \frac{2}{b}} \leq (2 \pi)^{\frac{3}{2}} \sum_{q \geq 1} (a^{-1})^q = \frac{(2\pi)^{\frac{3}{2}}  a^{-1} } {1 - a^{-1}  }
\end{equation*}
where we used that $ \eta b \geq 3 $ and that $ b \geq 2$. 
Finally, we estimate  $ \sum_{q \geq 0} \norm{\curl (-\Delta)^{-1} d_{q+1}  }_{L^2}$.  
Recall that $d_{q+1} = \curl  \curl ( \sum_{k \in \Lambda_B} a_{(k)} D_k^c) $. Using that $\curl ( \sum_{k \in \Lambda_B} a_{(k)} D_k^c)$ is divergence free, we have that 
\begin{equation*}
    \curl (-\Delta)^{-1} \curl  \curl \Biggl( \sum_{k \in \Lambda_B} a_{(k)} D_k^c \Biggr) = \curl  \Biggl( \sum_{k \in \Lambda_B} a_{(k)} D_k^c   \Biggr) \, .
\end{equation*}
From the triangle inequality,  \eqref{mag amp estimates}, Lemma~\ref{linear estimate lemma}, and the fact that $\ell^{-1} \ll \lambda_{q+1}$ we have
\begin{align*}
     \sum_{k \in \Lambda_B} \norm{ \curl  (a_{(k)} D_k^c) }_{L^2} 
    &\leq \sum_{k \in \Lambda_B} \norm{ \curl (a_{(k)}D_k^c) }_{L^2} \leq   \sum_{k \in \Lambda_B}  \norm{ \nabla a_{(k)} \times D_k^c }_{L^2} +  \norm{a_{k ,B} \curl D_k^c }_{L^2}  \notag \\
    &\leq \sum_{k \in \Lambda_B}  \norm{ \nabla a_{(k)} }_{C^0} \norm{ D_{k}^c }_{L^2}  + \norm{ a_{(k)}  }_{C^0} \norm{\curl D_k^c }_{L^2}  \notag\\
    & \lesssim \ell^{-7}\lambda_{q+1}^{-2} + \ell^{-2}\lambda_{q+1}^{-1}  
    \lesssim \ell^{-2}\lambda_{q+1}^{-1}.
\end{align*}
Using a factor of $\ell$ to absorb the implicit constant, and using that $\ell^{-3}\lambda_{q+1}^{-1} \leq \lambda_{q+1}^{-\frac{1}{2}}$,  we deduce that 
\begin{equation*}
  \sum_{q \geq 0} \norm{\curl (-\Delta)^{-1} d_{q+1}  }_{L^2}  \leq \sum_{q \geq 0}\lambda_{q+1}^{-\frac{1}{2}} 
  \leq \sum_{q \geq 1} (a^{bq})^{-\frac{1}{2}} = \frac{a^{-\frac{b}{2} }}{ 1 - a^{-\frac{b}{2}}  } \leq \frac{a^{-1}  }{1 - a^{-1} } 
\end{equation*}
where we used that $b \geq 2$.

Therefore we have proven that 
\begin{equation*}
\norm{ A-A_0}_{L^2} \leq \frac{2 (2\pi)^{\frac{3}{2}} a^{-1}  }{1 - a^{-1} }
\end{equation*}
which gives the bound 
\begin{equation*}
|\HH_{B,B}(t) - \HH_{0,B,B}(t)| \leq \frac{2 (2\pi)^{\frac{3}{2}} a^{-1}  }{1 - a^{-1} } +  \frac{a^{-\beta b}}{1- a^{-\beta b} }\frac{1}{a^{\frac{1}{2}} (2\pi)^{\frac{3}{2}} } = \frac{1}{a^{\frac{1}{2}}} \left( \frac{2 (2\pi)^{\frac{3}{2}} a^{-\frac{1}{2}}  }{1 - a^{-1} }    +  \frac{a^{-\beta b}}{1- a^{-\beta b} }\frac{1}{ (2\pi)^{\frac{3}{2}} } \right).
\end{equation*}
Since $\mathcal{H}_{0,B,B}(1) = \frac{1}{(2 \pi)^3 a^{\frac{1}{2}}}$ by taking $a$ sufficiently large, we can ensure that $ |\mathcal{H}_{B,B}(t) - \mathcal{H}_{0,B,B}(t)| \leq \frac{1}{3}\mathcal{H}_{0,B,B}(1)$. This implies that $\mathcal{H}_{B,B}(1) \geq \frac{2}{3}\mathcal{H}_{0,B,B}(1) > 0$ and $|\mathcal{H}_{B,B}(0)| \leq \frac{1}{3}\mathcal{H}_{0,B,B}(1)$, since $\mathcal{H}_{0,B,B}(0) = 0$. This shows that the magnetic helicity at least doubles in magnitude and is nonzero at time $1$.  
 
\begin{remark}
The total energy $\EE(t)$ and the cross-helicity $\mathcal{H}_{\omega,B}(t)$ can similarly be shown to not be conserved for the limiting solution $(u,B)$, if we use \eqref{initial fields} as the first term in the sequence. By inspection, the initial fields have non-trivial energy and cross-helicity which allows us to prove the non-conservation as in the proof of Theorem~\ref{thm:main}.   
\end{remark}

\appendix
\section{Proof of Geometric Lemmas}
\label{Proof of Geometric lemmas}

In this section, we will provide proofs of Lemmas~\ref{geometric lem 1} and~\ref{geometric lem 2}, following the classical arguments of~\cite{DLSZ13,BDLISZ15}.

\begin{proof}[Proof of Lemma~\ref{geometric lem 1}]
Let $\Lambda_B = \{ e_1, e_2, e_3, \frac{3}{5}e_1 + \frac{4}{5}e_2, -\frac{4}{5}e_2 - \frac{3}{5}e_3 \}$ and to these vectors, consider the orthonormal bases given by 
\begin{center}
  \begin{tabular}{ | l | c | r | }   
    $k$ & $k_1$ & $k_2$ \\ \hline
    $e_1 $&  $e_2$ & $e_3$ \\ 
    $e_2$ & $e_3$ & $e_1$ \\
    $ e_3$ & $e_1$ & $e_2$\\
    $ \frac{3}{5}e_1 + \frac{4}{5}e_2$ & $\frac{4}{5}e_1 - \frac{3}{5}e_2$ & $e_3$\\
    $ -\frac{4}{5}e_2 -\frac{3}{5}e_3$ & $\frac{3}{5}e_2 - \frac{4}{5}e_3$ & $e_1$\\   
  \end{tabular}
\end{center}
We define 
\begin{align*}
A_1 &:= e_2 \otimes e_3 - e_3 \otimes e_2, \quad 
A_2 := e_3 \otimes e_1 - e_1 \otimes e_3, \quad
A_3 := e_1 \otimes e_2 - e_2 \otimes e_1,\\
A_4 &:= \left(\frac{4}{5}e_1 - \frac{3}{5}e_2 \right) \otimes e_3 - e_3 \otimes \left(\frac{4}{5}e_1 - \frac{3}{5}e_2 \right), \quad 
A_5 := \left(\frac{3}{5}e_2 - \frac{4}{5}e_3 \right) \otimes e_1 - e_1 \otimes \left(\frac{3}{5}e_2 - \frac{4}{5}e_3 \right) \,.
\end{align*}
Using these matrices we can write 
\begin{equation}
\label{asy zero iden}
\frac{7}{4}A_1 + \frac{11}{3}A_2 + A_3 + \frac{35}{12}A_4 + \frac{5}{3}A_5 = 0 \,. 
\end{equation}
Since $A_1, A_2, A_3$ form a basis for the 3 $\times$ 3 skew-symmetric matrices, we can express any skew-symmetric matrix $A$ as a unique linear combination $A = c_1A_1 + c_2A_2 + c_3A_3$. Combining this with  \eqref{asy zero iden} gives 
\begin{equation*}
\left(\frac{7}{4} + c_1 \right)A_1 + \left(\frac{11}{3} + c_2 \right)A_2 + \left(1 + c_3 \right)A_3 + \frac{35}{12}A_4 + \frac{5}{3}A_5 = A \,.
\end{equation*}
Therefore we can define 
\begin{align*}
\gamma_{1,B} = \sqrt{\frac{7}{4} + c_1}, \quad
\gamma_{2,B}  = \sqrt{\frac{11}{3} + c_2}, \quad
\gamma_{3,B} = \sqrt{ 1 + c_3},  \quad 
\gamma_{4,B} = \sqrt{\frac{35}{12}},   \quad
\gamma_{5,B} = \sqrt{\frac{5}{3}} \, .
\end{align*}
For $\varepsilon_B < \sqrt{2}$, the $\gamma_i$ will be smooth. Therefore it suffices to take $\varepsilon_B = 1$.
\end{proof}

 \begin{proof}[Proof of Lemma~\ref{geometric lem 2}]
Proceeding as before let $\Lambda_u = \{ \frac{5}{13}e_1 \pm \frac{12}{13}e_2, \frac{12}{13}e_1 \pm \frac{5}{13}e_3, \frac{5}{13}e_2 \pm \frac{12}{13}e_3 \}$ and to these vectors, consider the orthonormal bases given by 
\label{velocitywave}
\begin{center}
  \begin{tabular}{ | l | c | r | }  
    $k$ & $k_1$ & $k_2$ \\ \hline
    $\frac{12}{13}e_1 \pm \frac{5}{13}e_2 $ &  $\frac{5}{13}e_1 \mp \frac{12}{13}e_2$ & $e_3$ \\ 
    $\frac{5}{13}e_1 \pm \frac{12}{13}e_3$ & $\frac{12}{13}e_1 \mp \frac{5}{13}e_3 $ & $e_2$ \\
    $\frac{12}{13}e_2 \pm \frac{5}{13}e_3$ & $\frac{5}{13}e_2 \mp \frac{12}{13}e_3$ & $e_1$\\
  \end{tabular}
\end{center} 
Note that $\Lambda_u \cap \Lambda_B = \emptyset$. Next, note that $\sum_{k\in\Lambda_u} \frac 12 k_1\otimes k_1 = \Id$, and thus by the implicit function theorem, there exists $\varepsilon_u$ such that for $S \in B_{\varepsilon_u}(\Id)$, $S$ can be expressed as a linear combination of the $S_i$ with positive coefficients. See~\cite{DLSZ13,BDLISZ15} for further details.
\end{proof}

\section{Proof of Magnetic Helicity Conservation}
\label{app:Taylor}

In this appendix we give the proof of Theorem~\ref{thm:Taylor}. For $ u, B \in L^3(0,T;L^3(\T^3))$ we have magnetic helicity conservation for \eqref{eq:ideal:mhd}, as in \cite{KangLee07}. A simple modification of this argument shows that Leray-Hopf solutions of \eqref{eq:MHD} satisfy a {\em magnetic helicity  balance} (by interpolation we have that $u, B \in L_{x,t}^{\frac{10}{3}}(\mathbb{T}^3) $):
\begin{equation}
\label{helicity balance}
    \int_{\mathbb{T}^3 } A \cdot B(t)dx +  2 \mu \int_0^t \int_{\mathbb{T}^3} \curl B \cdot B(s)  dx ds = \int_{\mathbb{T}^3} A \cdot B (0) dx.
\end{equation}

Assume that $(u_j, B_j)$ is a weak ideal sequence and  that $\mu_j \to 0$. Using  the uniform bounds coming from the {\em total energy inequality} \eqref{eq:energy:inequality} we have that 
\begin{equation*}
     \mu_j \int_0^t \int_{\mathbb{T}^3} |\curl B_j \cdot B_j| dx ds \leq t\mu_j^{\frac{1}{2}} \| \mu_j^{\frac{1}{2}} (\curl B_j) \|_{L_t^{\infty}L_x^2} \| B_j\|_{L_t^{\infty}L_x^2 } \to 0 \quad \text{ as } j \to \infty \, .
\end{equation*}
Therefore, passing to the limit in \eqref{helicity balance} (for $A_j$ and $B_j$), we obtain
\begin{equation*}
    \liminf_{j \to \infty} \int_{\mathbb{T}^3 } A_j \cdot B_j (t)dx = \liminf_{j \to \infty} \int_{\mathbb{T}^3 } A_j \cdot B_j (0)dx = \int_{\mathbb{T}^3} A \cdot B(0)dx
\end{equation*}
where the last equality comes from the fact that since $B_j(0) \rightharpoonup B(0)$ in $L^2$, $A_j(0) \to A(0)$ in $L^2$ and the product of a weakly convergent sequence and a strongly convergent sequence converges. 
By Aubin-Lions Lemma with the triple $L^2 \subset H^{-\frac{1}{2}} \subset H^{-3}$  applied to $B_j$, we conclude that $B_j(t)$ has a strongly convergent subsequence in $C([0,T]; H^{-\frac{1}{2}})$ (also denoted $B_j(t)$).  This implies $A_j(t)$ is strongly convergent in $C([0,T]; \dot{H}^{\frac{1}{2}})$. Along this subsequence
\begin{equation*}
    \int_{\mathbb{T}^3} A_j \cdot B_j(t) dx =  \int_{\mathbb{T}^3} |\nabla|^{\frac{1}{2}}A_j \cdot |\nabla|^{-\frac{1}{2}} B_j(t) dx \to  \int_{\mathbb{T}^3} |\nabla|^{\frac{1}{2}}A\cdot |\nabla|^{-\frac{1}{2}} B(t) dx =  \int_{\mathbb{T}^3} A \cdot  B(t) dx
\end{equation*}
where we are using that limit of the strongly convergent subsequence must coincide with the weak ideal limit by uniqueness of weak-* limits. Furthermore, we can extend this to the entire sequence to conclude  
\begin{equation*}
    \int_{\mathbb{T}^3} A(t) \cdot B(t) dx = \int_{\mathbb{T}^3} A(0) \cdot B(0) dx
\end{equation*}
as desired.

\section*{Acknowledgements}
R.B. was supported by an NSF Graduate Fellowship Grant No. 1839302. T.B. was supported by the NSF grant DMS-1600868. V.V. was supported by the NSF CAREER grant DMS-1652134. The authors are grateful to Theodore Drivas for pointing us to a number of references on magneto-hydrodynamic turbulence.

{\small

 }
 
\end{document}